\documentclass[12pt]{article}

\input{epsf}
\usepackage{epsfig}
\usepackage{graphics}

\usepackage{graphicx,amsmath,latexsym,enumerate,amsthm,color}
\usepackage{amssymb}
\usepackage{amsmath}
\usepackage[latin1]{inputenc}


\setlength{\oddsidemargin}{0.5cm}
\setlength{\textwidth}{15.cm}
\setlength{\evensidemargin}{0.5cm}
\setlength{\textheight}{22cm}


\newcommand{\ti}{\tilde}

\newtheorem{proposition}{Proposition}
\newtheorem{remark}{Remark}

\newtheorem{lemma}{Lemma}

\newcommand{\red}{\textcolor{red}}



\begin{document}

\begin{center}
{\bf \huge A hierarchy of models related to nanoflows and surface diffusion}\\
\vspace*{1 cm}
{ Kazuo AOKI, Pierre CHARRIER, Pierre DEGOND}\\
\vspace*{0.5 cm}
Department of Mechanical Engineering and Science\\
Graduate School of Engineering\\
Kyoto University, Kyoto, 606-8501, Japan\\
aoki@aero.mbox.media.kyoto-u.ac.jp\\
\vspace*{0.5 cm}
MATMECA, IMB-Applied Mathematics\\
Universit\'{e} de Bordeaux,\\
33405 Talence cedex, France \\
Pierre.Charrier@math.u-bordeaux1.fr\\
\vspace*{0.5 cm}
MIP, UMR 5640 (CNRS-UPS-INSA),\\
Université Paul Sabatier\\
31062 Toulouse cedex 4, France \\
Pierre.Degond@mip.u-tlse.fr\\
\end{center}

\vspace*{1 cm}

\begin{abstract}    
In last years a great interest was brought to molecular transport problems at nanoscales, 
such as  surface diffusion or molecular flows in nano or sub-nano-channels. In a series of papers 
V. D. Borman, S. Y. Krylov, A. V. Prosyanov and J. J. M. 
Beenakker proposed to use kinetic theory in order to analyze the mechanisms that determine 
mobility of molecules in nanoscale channels. This approach proved to be remarkably useful to
give new insight on these issues, such as density dependence of the diffusion coefficient.
In this paper we revisit these works
to derive the kinetic and diffusion models introduced by V. D. Borman, S. Y. Krylov, A. V. Prosyanov 
 and J. J. M. Beenakker by using classical tools of kinetic theory such as scaling and systematic 
asymptotic analysis. Some results are extended to less restrictive hypothesis.
\end{abstract}


\vspace*{4mm}
\begin{flushright}
to the memory of Carlo Cercignani
\end{flushright}

\section{Introduction}

In last years a great interest was brought to micro and nano-flows, in part driven by applications 
such as  MEMS, micropumps, lab-on-the-chip systems, carbon nanotubes, molecular sieves, etc.
Such flows through micro or nano-geometries (micro- or nano-channels or micro or nano-porous 
materials) exhibit unusual behavior and therefore create the need for new or more precise models 
for numerical simulations. \\

	The gas flows in micro-geometries are often characterized by rather large Knudsen number
but small Reynolds and Mach number. Moreover some phenomena which are usually neglected in classical 
fluid dynamics may take importance, such as thermal creep flow, thermal stress slip flow or thermal 
edge flow. The main
tool at this scale is the kinetic theory used either for direct numerical simulations or for deriving
correct fluid limit models able to take into account the main characteristics of the flow. A survey 
of this approach and a complete bibliography can be found in (\cite{Sone1}) and (\cite{Sone2}) and
for diffusion models on specific geometries and applications the reader can refer to (\cite{AD}), 
(\cite{CD}) and (\cite{A-al}).   
When considering flows through smaller geometries, down to nanoscale, new issues must be addressed 
(see \cite{Beenakker}, \cite{KBA}). The
first one occurs when the size of the channel is comparable with the range of interaction of the gas 
molecules with the wall, i.e., for a diameter of a few nanometers. Then surface-dominated effects become
predominant, and in the vicinity of the surface, the gas flow loses its three-dimensional character 
and becomes two- and, in the limit, one-dimensional, and the mass flux can dramatically exceeds predictions 
of the Knudsen diffusion model (\cite{HPWSAGNB}). Therefore these effects must be included in the models. 
Finally, more complicated 
effects occur when the size of the pore is still smaller (around 0.5 nanometer or smaller), and 
becomes
comparable to the de Broglie wavelength and quantum effects must be considered at this scale. \\  

	In this paper we focus on gas flows through geometries with characteristic size of some
nanometers so that the gas molecule-surface interaction is important but the quantum effects of 
zero-point energy can be neglected. Traditionally, at this scale, for
the numerical simulation, one uses molecular dynamics (\cite{KBA}), or for surface diffusion, lattice 
gas hopping models. Unfortunately molecular dynamics leads to expensive computations and is limited
to short space and time simulations. A different approach is proposed in a series 
of papers (\cite{Borman},\cite{BKP2},\cite{Kry1},\cite{Kry2},\cite{Kry3},\cite{Kry4},\cite{BBK}). 
The authors proposed 
to use kinetic theory in order to analyze the mechanisms that determine 
mobility of molecules in nanoscale channels. This approach proved to be remarkably useful to
give new insight on these issues and, in particular, it was proved that this new kinetic theory of 
surface diffusion is able to explain in a rather natural way  the density dependence of the 
diffusion coefficient which has been observed for instance in zeolites.
Those articles are not enough known in the community of applied mathematics and kinetic theory and it 
seems interesting to revisit them from a mathematical point of view and 
to derive the kinetic and diffusion models by 
using classical tools of kinetic theory such as scaling and systematic asymptotic analysis.
In particular the rigorous approach for modelling molecules trapped by surface effects, introduced in 
(\cite{Degond}) and (\cite{DPV}), will be useful in this context. 
We consider here the simple case of molecules moving in a {\em 2D plane}.\\

The molecule-surface interaction is located on a narrow layer (typically $L=0.3$ nanometers). On microscale 
geometries, for instance in a pipe with diameter around some micrometers the flow of
molecules inside this layer need not to be described precisely and can simply be modelled by a classical 
boundary condition such as specular or diffuse reflection. For  pipes with smaller diameter, about some 
nanometers, but rather long ($X=$ some micrometers or more), the effect of the molecule-surface interaction 
is too much important to be described by a simple boundary condition. So the starting point of our study
is a kinetic model including the molecule-surface interaction through Vlasov terms which takes into account
the potential interaction of the atoms on the surface
and through a relaxation term of molecules by phonons which represents the effect of the thermal fluctuations.
This is a crude representation of the very complex interaction between molecules and the surface but 
sufficient to include in the model much more information than the usual boundary conditions. Nevertheless such 
a model is still very expensive for simulation because of the
smallness of the surface layer and the stiffness of the effect of the interaction potential. It is therefore
useful to look for other models derived from this basic kinetic equation by asymptotic analysis.
 The main influence of the surface on the flow is the confinement of some molecules in a narrow zone close to
the surface, the "surface layer". On the scale we consider here it is relevant to perform
an asymptotic analysis when the ratio $L/X \rightarrow 0$. This leads to the second model of the hierarchy,
which can be seen as a multi-phase model coupling the bulk flow of molecules outside the range of surface 
forces and a two energy group kinetic model describing the surface molecules, i.e., the molecules within 
the range of surface forces (see \cite{Borman}). \\

	However the kinetic equations for the surface molecules still contain several time 
scales that can be quite different and make its solution difficult. The first time scale is given by
$t_{fl}$ the "time of flight" over the potential well, the second one is given by $\tau_{ms}$ the 
molecule-phonon relaxation time and the last one is given by $\tau_z$, the characteristic time for
a molecule to cross the surface layer. These times must be compared with $t_{max}$ the characteristic 
time of observation of the overall system. When $t_{fl}<<t_{max}$, but  $\tau_{ms} \approx t_{max}$, in so 
far as we are interested in the mass flow along the surface  
we do not need to get information on the variation of the distribution function on a short time scale 
($\approx t_{fl}$) and it is useful to derive a kinetic model at a "mesoscopic" time scale comparable to 
$\tau_{ms}$ and $t_{max}$,
by an asymptotic analysis when $t_{fl}/t_{max} \rightarrow 0$. If we consider a larger observation time
such that $\tau_{ms}<< t_{max}$, the system of surface molecules can be described by a diffusion
model obtained by an asymptotic limit of the "multi-phase" kinetic model when $\tau_{ms}/t_{max} \rightarrow 0$.
Thus various diffusion models are derived according to the respective size of $\tau_{ms}$ and $\tau_z$. 
These successive rescalings and asymptotic limits lead to a hierarchy of models which is sketched on 
figure \ref{figpotar}.\\

This paper is organized as follows. The hierarchy of models indicated above is derived in section 2 to 4 
under a simplifying hypothesis on the surface potential and for low density flows for which intermolecular 
collisions are negligible.
In Section 2 we derive a multiphase kinetic model where the equations for the molecules in the range 
of interaction of the surface reduce from two space-dimension  to one-space dimension. 
Section 3 is devoted to the derivation, when $t_{fl}/t_{max} \rightarrow 0$, of a mesoscopic kinetic model 
for the flow of molecules inside the surface layer assuming that the surface potential is rapidly oscillating 
in the x-direction parallel to the surface. This homogenized model is derived under the assumption that surface 
molecules can be described by a one energy group of molecules trapped in the surface layer.
In section 4 we study the diffusion limit,when $\tau_{ms}/t_{max} \rightarrow 0$, of the kinetic model for 
surface molecules derived in section 2. In a  first step we consider that
surface molecules can be described by a one energy group of molecules trapped in the surface layer and
we derive the diffusion limit in the isothermal case and afterwards in the non-isothermal case. In a second 
step the diffusion limit is obtained for the two energy group model in the configuration of a narrow channel
with two surface layers and no bulk flow and we consider several regimes according to the ratio of $\tau_{ms}$ and
$\tau_z$.   

\begin{center}
\begin{figure}
\begin{center}
\includegraphics[height= 8. cm, angle=0]{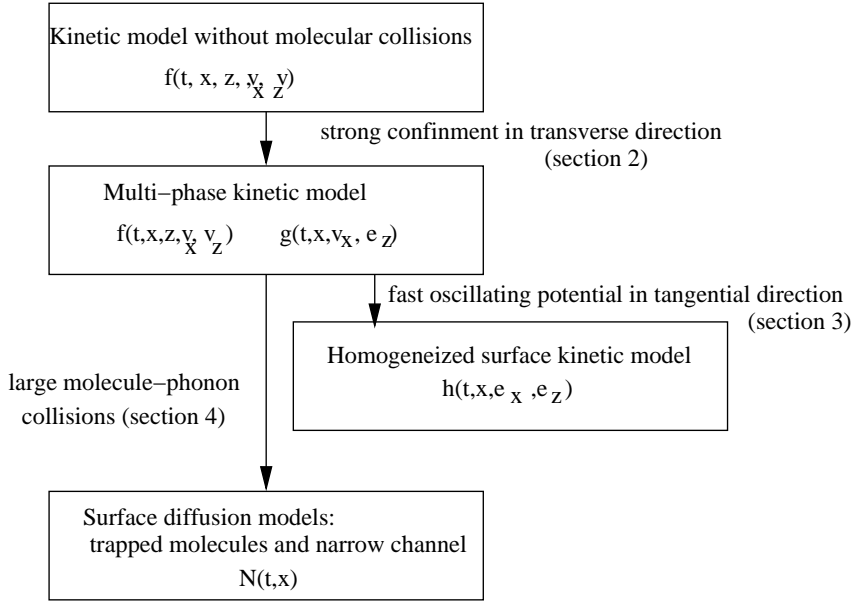}\\
\caption{\em  hierarchy of models.  \label{roadmap} }
\end{center}
\end{figure}
\end{center}

\section{Kinetic model for a gas flow with local interaction with the surface of a solid}

\subsection{Introduction}

In this section we address the issue of modeling a flow of molecules at the vicinity of
a solid wall, for instance in a narrow channel. The influence of the interaction of molecules
with the surface induces a change in the behavior of the gas flow. It has been noticed in
previous works (\cite{Beenakker}, \cite{BBK}) that the flow loses its three dimensional
character and becomes two- and, in the limit, one-dimensional and the transport is 
modified by this phenomenon. The same mechanism has been studied in a more rigorous way
in (\cite{DPV}). Here, since we assume that the molecules are moving in a 2D $(x,z)$ plane, the
flow near the surface will be a one dimensional flow. Following the ideas presented in (\cite{Borman}) and 
(\cite{Beenakker}), and the mathematical approach proposed in (\cite{DPV}), we set up kinetic 
models that take into account the effect of the
molecule-solid interaction and describe the motion of molecules within the range of surface
forces in lower dimension space.\\

  For the sake of simplicity we consider that the molecules move in a plane $(x,z)$ (see remark 1
for the case of a 3D-flow near a plane wall) and we
consider a solid occupying the half-space $z<0$, and a set of molecules of a gas moving in 
the region $z>0$. The state of the gas is described by the distribution function 
$f=f(t,x,z,v_x,v_z)$ and its evolution is modeled by the following kinetic equation
\begin{equation}
\partial_{t}f+v_x\partial_{x}f+v_z\partial_{z}f-\frac{1}{m}\partial_x{\mathcal V}\ \partial_{v_x}f-
\frac{1}{m}\partial_z{\mathcal V}\ \partial_{v_z}f=I_{ph}+Q_{m}, \label{basiceq}
\end{equation}
where ${\mathcal V}={\mathcal V}(x,z)$ is the the interaction potential of the molecules with the 
solid, $I_{ph}$ is the molecule-phonon collision integral describing the interaction of the 
gas molecules with the thermal fluctuations of the solid and $Q_{m}$ describes the 
interaction of the gas molecules with each other. 

We assume that the potential $\mathcal V$ satisfies 
\begin{enumerate}
\item $0 \leq {\mathcal V}$,
\item $\forall x,\ \lim_{z \rightarrow 0}{\mathcal V}(x,z) = + \infty$, 
\item the normal part of the potential is a repulsive-attractive  potential, i.e. 
for every fixed $x$ the potential $z \rightarrow {\mathcal V}(x,z)$ is repulsive (i.e. 
$\partial_z{\mathcal V}(x,z)<0$) for 
$0\leq z < z_m(x)$ and is attractive ($\partial_z{\mathcal V}(z) >0$) for $z_m(x) <z$.
\item The range of the surface forces is finite and thus, the potential satisfies
${\mathcal V}(x,z)={\mathcal V}_m  \mbox{ for}\ z\geq L$. 
\end{enumerate}
\begin{center}
\begin{figure}
    \hfill
    \begin{minipage}{0.45\textwidth}
   \includegraphics[width=0.9\columnwidth,angle=0,clip]{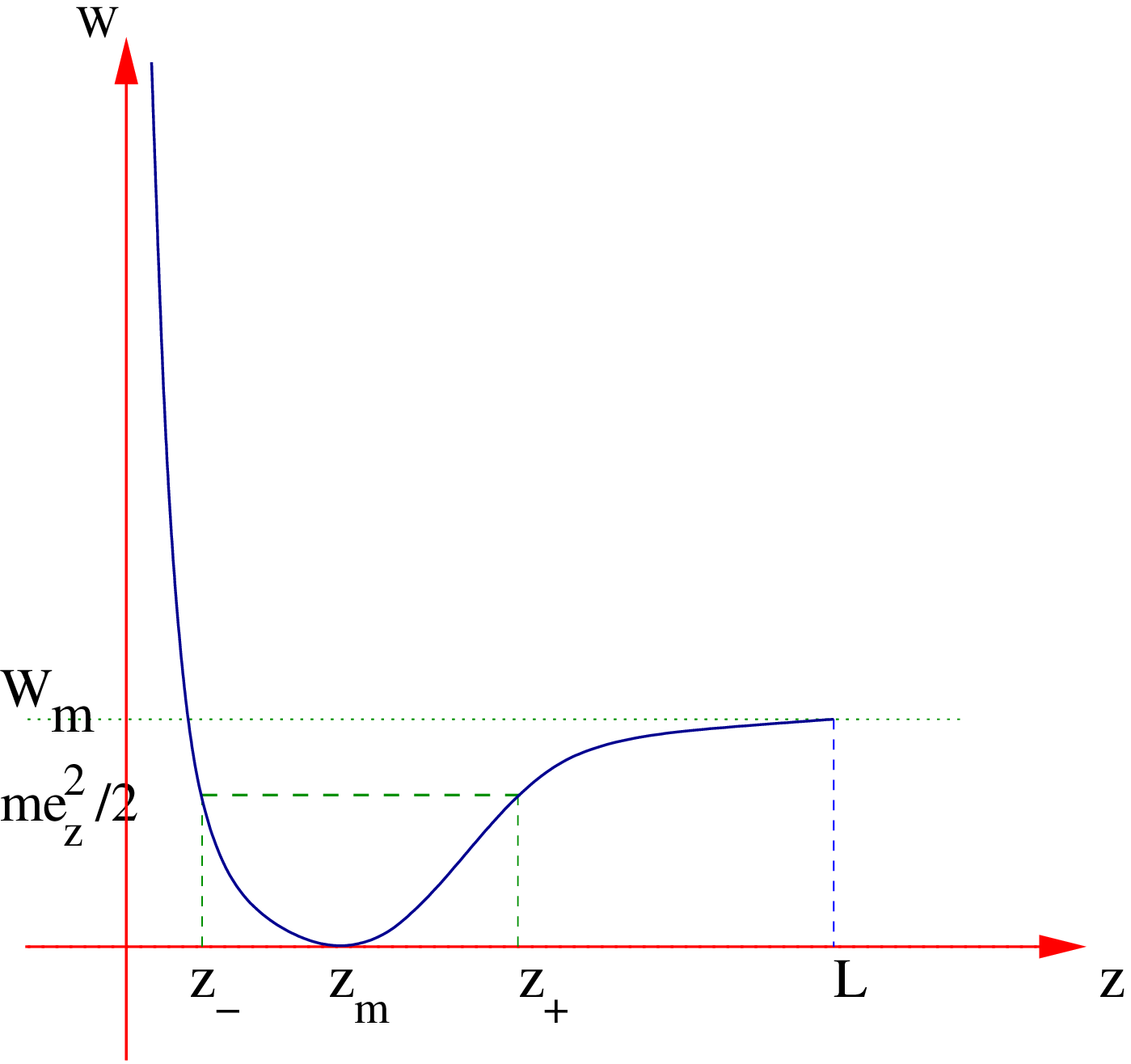}
    \end{minipage}
    \hfill
    \begin{minipage}{0.45\textwidth}
     \includegraphics[width=\columnwidth,angle=0,clip]{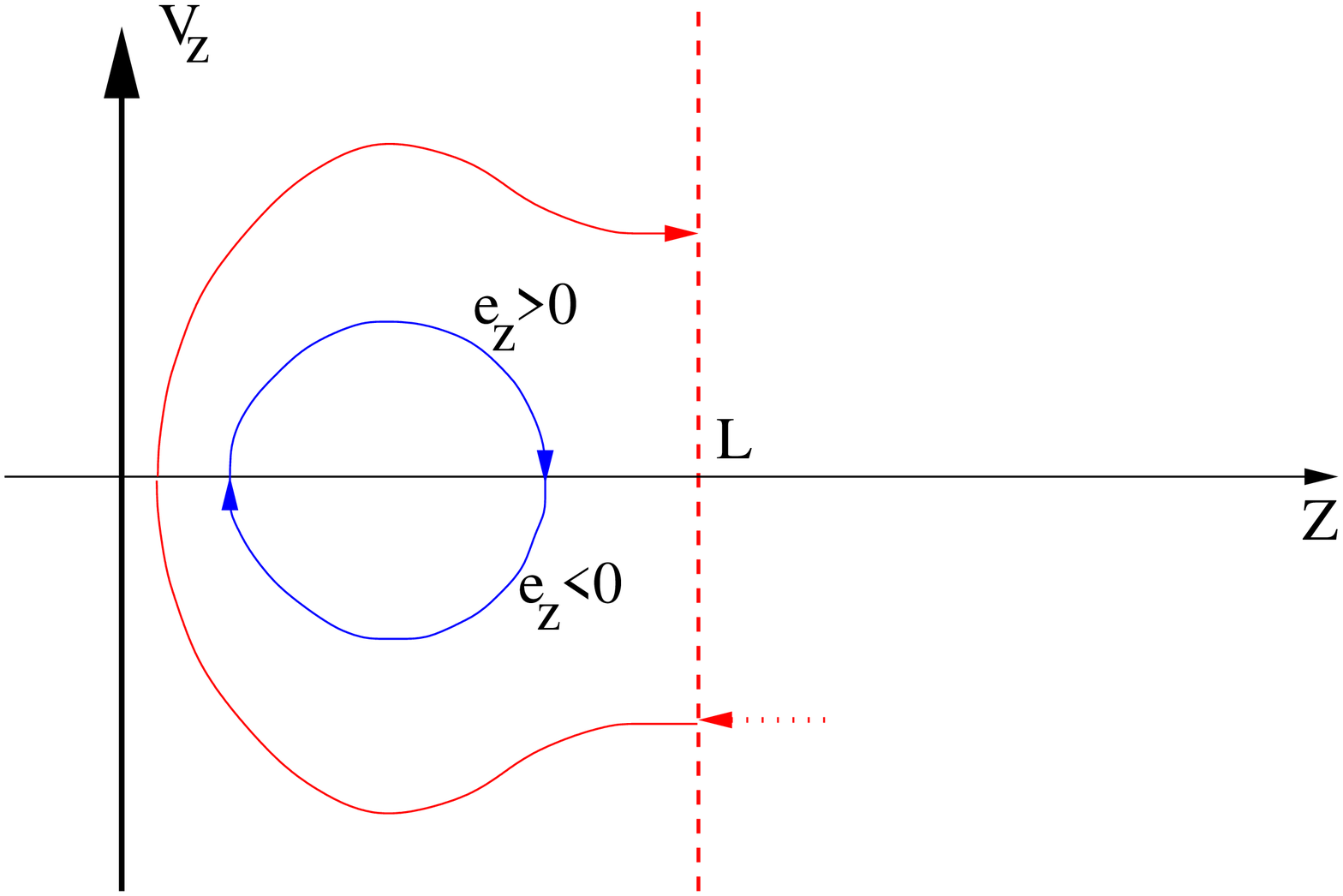}
    \end{minipage}
    \hfill
\caption{\em Repulsive-attractive potential (left) and trajectories of molecules in the phase 
plane (right)
\label{figpotar}  } 
\end{figure} 
\end{center}

Following (\cite{Kry2}), we assume a simplified form of the collision term describing the 
interaction between molecules and phonons. This is a crude approximation but sufficient to 
include in the model new physical effects. More precisely we define
\begin{eqnarray}
\mbox{for}\;  z<L,\; I_{ph}[f]&=& \frac{1}{\tau_{ms}}(\frac{m\ n[f]}{2k\pi T}M-f), \label{Iph}
\end{eqnarray}
where $\tau_{ms}$ is the molecule-phonon relaxation time, $M$ is the dimensionless equilibrium 
distribution function
\begin{equation*}
M(v_x,v_z)=\exp{\left( -\frac{m(v_x^2+v_z^2)}{2kT}\right)},  
\end{equation*}
\begin{eqnarray*}
{n}[f](t,x,z)&=& 
 \int_{v_z} \int_{v_x} f(t,x,z,v_x,v_z)\ dv_x\ dv_z\ ,
\end{eqnarray*}
$m$ is the mass of a gas molecule, $k$ is the Boltzmann constant and $T$ is the temperature. 
Let us remark that the right-hand-side in equation (\ref{basiceq}) satisfies the relation
\begin{equation*}
\int (\frac{m\ n[f]}{2k \pi T}M-f)\ dv =0, 
\end{equation*}		 
which insures the mass conservation.\\

Let us notice that the range of the surface forces $L$ is small 
(typically $L \approx 0.3 nm$) and in most cases much 
smaller than the mean free path of the molecules in the gas phase.
Finally, since the interaction of molecules with phonons 
is important only on a part $[0,L^*]$ of the range L of surface forces ($0 \leq  z \leq L^*$, with 
$L^*<L$), we can assume that $I_{ph}=0,$ for $z\geq L$.\\
Those assumptions imply that in the bulk flow the motion of the gas molecules is given by
the Boltzmann equation
\begin{equation}
\partial_{t}f+v_x\partial_{x}f+v_z\partial_{z}f=Q_{m}, \label{boltzmann}
\end{equation}
which is consistent with the classical kinetic theory.\\

The main goal in our modeling is to take into account the confinement of some gas molecules in the surface layer 
$(0\leq z \leq  L)$ due to the interaction with the surface. But since the width of this surface layer 
is very small, as indicated above, it is useful to derive by asymptotic analysis a model by assuming
that the ratio $L/X$ tends to zero (where $X$ is the characteristic length of observation). The phenomenon
of confinement is mainly transverse to 
the boundary, but since the interaction potential works in both $x$ and $z$-directions the analysis 
is rather technically complicated. We suggest to simplify once more the configuration to make easier the analysis
while keeping in the model enough important features to put in evidence new phenomena near the surface 
at the nanoscale. So we assume that the motion of the gas molecules inside the bulk flow is given by (\ref{boltzmann})
and that the motion of the gas molecule inside the range of the surface forces 
can be separated into two independent components, the tangential and the transverse ones. This can be achieved 
by assuming that
\[ \partial_x{\mathcal V}(x,z)=\varphi(x)  \mbox{ and}\; \partial_z{\mathcal V}(x,z)=\psi(z),\]
which can be obtained by assuming that the interaction potential writes, for $z<L$
\begin{equation}
{\mathcal V}(x,z) = U(x) + W(z), \label{simplepot1}
\end{equation} 
where $W$ is a repulsive attractive potential with $W(z)=W_m$ for $z\geq L$ and where $U$, the tangential component of the potential, satisfies
$0\leq U(x) \leq U_m$. Then if $W_m>>U_m$, we can assume that
\begin{equation}
\mbox{for }\ z>L, \;\;  {\mathcal V}(x,z) = U(x) + W_m \approx W_m, \label{simplepot2}
\end{equation}
which is consistent with the assumption that the range of the surface forces is smaller than $L$ (see figure (\ref{figpotar})).
\\

Another simplification is related to the intermolecular collisions. The study of the influence of
the surface layer on the flow is relevant only in geometries with one characteristic dimension 
(for instance the size of the pore in a porous medium or the diameter of a pipe) about
some nanometers. Thus in a first step it is reasonable to assume that we are working at a scale
where the intermolecular collisions can be neglected so that
\begin{equation}
Q_m=0.\label{nocoll}
\end{equation}

These assumptions (\ref{simplepot1}-\ref{simplepot2}-\ref{nocoll}) are used to derive a hierarchy of models
describing a gas flow in the vicinity of the surface.

\begin{remark}
It should be noted that, under the assumptions (\ref{Iph}) and (\ref{nocoll}), we can
assume that the molecules are moving in the original 3D space
$(x, y, z) (z > 0)$. More specifically, if we assume the 3D version
of (\ref{Iph}) originally for the molecule-phonon interaction, then $f$
corresponds to the marginal velocity distribution function, i.e.,
the integral of the original velocity distribution function with
respect to $v_y$ from $-\infty$ to $\infty$.
\end{remark}

\subsection{Motion of molecules in a repulsive-attractive potential}

Before all we need to introduce some notations associated with the motion of a molecule in a 
one-dimensional repulsive-attractive potential.
 Let us denote by $\varepsilon_z$ the total energy of the normal motion
\begin{equation*}
\varepsilon_{z}= {\frac{1}{2}m v_z^2+W(z)},
\end{equation*}
and let us introduce the "equivalent velocity"
\begin{equation*}
\mbox{for } v_z \neq 0,\;  e_z=e_z(z,v_z)=sgn (v_z)\sqrt{v_z^2+\frac{2}{m}W(z)},
\end{equation*}
so that $e_z$ is an odd function of $v_z$ and
\[ lim_{v_z \rightarrow 0+}\ e_z(z,v_z)= \sqrt{\frac{2}{m}W(z)}  \neq lim_{v_z \rightarrow 0-}\ e_z(z,v_z)
= - \sqrt{\frac{2}{m}W(z)}. \]
We denote $z_m,\  0<z_m<L$, the unique point where $W(z_m)=0$ (see figure (\ref{figpotar})),  and we define 
$z_{-}(e_z)$ and $z_{+}(e_z)$ in the following way: 
\begin{itemize}
\item for  $e_z \neq 0$\\
\begin{eqnarray*}
0<z_{-}(e_z)<z_m<z_{+}(e_z),&& \\
\mbox{for } 0<\sqrt{2W_m/m}<|e_z|,& & z_{+}(e_z)=L, \\
&& W(z_{-}(e_z))=\frac{m}{2}e_z^2, \\ 
\mbox{for } 0<|e_z|<\sqrt{2W_m/m},& & W(z_{+}(e_z))=\frac{m}{2}e_z^2 , \\
&& W(z_{-}(e_z))=\frac{m}{2}e_z^2 .
\end{eqnarray*}
\item for $e_z=0$,
\begin{equation*}
z_{-}(0)=z_m=z_{+}(0)
\end{equation*}
\end{itemize}
We notice that the particles with equivalent velocity $|e_z|< \sqrt{2W_m/m}$ are {\em trapped} (i.e.
cannot leave the "surface layer" $0<z<L$) and move between $z_{-}(e_z)$ and $z_{+}(e_z)$,
but the particles with $|e_z| > \sqrt{2W_m/m}$ are {\em free} and move between $z_{-}(e_z)$ and $z_{+}(e_z)=L$ 
and can go out the surface layer and go into the gas (see fig \ref{figpotar}).
The velocity of a particle with equivalent velocity $e_z$
located at position $z,\ z_{-}(e_z) \leq z \leq z_{+}(e_z)$,  is 
given by
\begin{equation}
v_z(z,e_z)=sgn (e_z)\sqrt{e_z^2-\frac{2}{m}W(z)},\;\;\; \mbox{for}\ z \in [z_{-}(e_z),z_{+}(e_z)],
\label{def-vz}
\end{equation}
and 
\begin{equation}
v_z(z_{-}(e_z),e_z)=v_z(z_{-}(-e_z),-e_z)  =0. \label{cl-}
\end{equation}
Moreover, for trapped molecules we have also
\begin{equation}
v_z(z_{+}(e_z),e_z)=v_z(z_{+}(-e_z),-e_z) =0. \label{cl+}
\end{equation}
Let us define
\begin{equation*}
\sigma_z(z,e_z)= (e_z^2-\frac{2}{m}W(z))^{-1/2}  \mbox{ for}\; |e_z| > \sqrt{2W(z)/m},\\
\end{equation*}
so that $\sigma_z(z,e_z)\ v_z(z,e_z)= sgn (e_z)$, and
\begin{eqnarray*}
{\tau}_z(e_z) &=& \int_{z_{-}(e_z)}^{z_{+}(e_z)} \sigma_z(z,e_z)dz
= \int_{z_{-}(e_z)}^{z_{+}(e_z)}(e_z^2-\frac{2}{m}W(z))^{-1/2}dz, \\
l(e_z)&=& |e_z|\ \tau_z(e_z).
\end{eqnarray*}
As in (\cite{DPV}), ${\tau}_z(e_z)$ can be interpreted as the time
for a trapped molecule to cross the surface layer and $l(e_z)$ is a length.
Moreover, for every fixed $z \in ]0,L]$  the application $v_z \rightarrow  e_z$ is a one-to-one application  from $[0,+\infty[$ onto
$[ \sqrt{\frac{2}{m}W(z)}, +\infty [$ and from (\ref{def-vz}) 
we have
\begin{equation*}
dv_z = |e_z| \sigma_z(z,e_z)de_z \mbox{ for}\  v_z>0,\ e_z>0.
\end{equation*} 
On the same way, for every fixed $z \in ]0,L]$  the application $v_z \rightarrow  e_z$ is a one-to-one
application  from $]-\infty, 0]$ onto $] -\infty,- \sqrt{\frac{2}{m}W(z)}]$ and
\begin{equation*}
dv_z = |e_z| \sigma_z(z,e_z)de_z \mbox{ for}\  v_z<0,\ e_z<0.
\end{equation*} 
Thus if we denote ${\mathcal E}_z(z)= \{e_z,\; |e_z|>(2W(z)/m)^{1/2} \}$, then for a given function $\psi(z,v_z)$ 
we have
\begin{eqnarray*}
\int_{v_z} \psi(z,v_z)\ dv_z&=& \int_{v_z>0} \psi(z,v_z)\ dv_z + \int_{v_z <0} \psi(z,v_z)\ dv_z, \nonumber \\
&=&    \int_{\sqrt{\frac{2}{m}W(z)}}^{+\infty} \psi(z,v_z(z, e_z))\ |e_z| \sigma_z(z,e_z)de_z \nonumber \\
&& + \int_{-\infty}^{-\sqrt{\frac{2}{m}W(z)}} \psi(z,v_z(z, e_z))\ |e_z| \sigma_z(z,e_z)de_z \nonumber \\
&=& \int_{|e_z|>\sqrt{\frac{2}{m}W(z)}} \psi(z,v_z(z, e_z))\ |e_z| \sigma_z(z,e_z)de_z,\nonumber \\
&=& \int_{{\mathcal E}_z(z)} \psi(z,v_z(z, e_z))\ |e_z| \sigma_z(z,e_z)de_z,
\end{eqnarray*}
and
\begin{eqnarray*}
\int_0^L \int_{{\mathcal E}_z(z)}\psi(z,v_z(z, e_z)) |e_z| \sigma_z(z,e_z)\ de_z\ dz \nonumber \\
= \int_{e_z} \int_{z_{-}(e_z)}^{z_{+}(e_z)}\psi(z,v_z(z, e_z))|e_z| \sigma_z(z,e_z)\ dz\ de_z. \label{Fubi-z-e}
\end{eqnarray*}

\vspace*{0.5 cm}
To take into account the molecule-solid interaction we split the flow in two parts, the {\em surface flow}
(for $0 \leq z < L$) and the {\em bulk flow} (for $L < z$). Since the intermolecular collisions are neglected,
 the state of the molecules in the bulk flow is described by the distribution function $f(t,x,z,v_x,v_z)$
which satisfies the following equation
\begin{equation*}
\partial_{t}f+v_x\partial_{x}f+v_z\partial_{z}f = 0.
\end{equation*}

\subsection{ Kinetic model for the molecules inside the surface layer}

We consider now the population of molecules inside the surface layer and we split it into two groups.
The group of "trapped molecules", with $\mid e_z \mid < \sqrt{2W_m/m}$ and the group of "free molecules"
with $\mid e_z \mid > \sqrt{2W_m/m}$.   We introduce for $z_{-}(e_z) \leq z \leq z_{+}(e_z)$,

\begin{eqnarray*}
\phi^t (t,x,z,v_x,e_z)&=&f(t,x,z,v_x,v_z(z,e_z)),\mbox{ for} \mid e_z \mid < \sqrt{\frac{2W_m}{m}},
  \label{defphi-t}\\
\phi^f (t,x,z,v_x,e_z)&=&f(t,x,z,v_x,v_z(z,e_z)),\mbox{ for} \mid e_z \mid > \sqrt{\frac{2W_m}{m}},
  \label{defphi-f}
\end{eqnarray*}
and
\begin{equation*}
\phi(e_z)=\phi^t(e_z)\chi^t(e_z)+\phi^f(e_z)\chi^f(e_z),
\end{equation*}
where ${\chi}^t({e}_z)$ is the characteristic function of the set $\{ {e}_z, |{e}_z|\leq {\sqrt{2{W}_m/m}} \}$ and 
${\chi}^f({e}_z)$ is the characteristic function of the set $\{{e}_z, |{e}_z|> {\sqrt{2{W}_m/m}} \}$.
Let us notice that at the boundary of the surface layer ($z=L$) we have
\begin{eqnarray}
{\phi^f (t,x,L,v_x,e_z)}&=&{f(t,x,L,v_x,v_z(L,e_z))}, \mbox{ for}\ {e_z<0}, \label{condL-}\\
{f(t,x,L,v_x,v_z)}&=& {\phi^f (t,x,L,v_x,e_z(L,v_z))}, \mbox{ for}\ {v_z>0}. \label{condL+}
\end{eqnarray}

Then $\phi^a,\ a=t,f$  satisfy 
\begin{eqnarray}
\partial_t \phi^a + v_x \partial_x \phi^a  -\frac{1}{m}U'(x)\partial_{v_x} \phi^a 
 + v_z(z,e_z) \partial_z \phi^a &=& \frac{1}{\tau_{ms}}(\frac{n[\phi]}{\gamma_0}M-{\phi}^{a}),
\label{eqphi-a}
\end{eqnarray}
where $\tau_{ms}$ is the molecule-phonon relaxation time, $M$ is the dimensionless equilibrium 
distribution function
\begin{equation*}
M(v_x,e_z)=\exp{\left( -\frac{m(v_x^2+e_z^2)}{2kT}\right)},
\end{equation*}
 $n[\phi]$ is the expression of $n[f]$ in term of the new variables
\begin{equation*}
n[\phi] = \int_{v_x} \int_{{\mathcal E}_z(z)} \phi(t,x,z,v_x,e_z)|e_z| \sigma_z(z,e_z)\ de_z\ dv_x,
\end{equation*}
and
\begin{equation*}
\gamma_0 (z) = \int_{v_x} \int_{{\mathcal E}_z(z)} M(v_x,e_z)|e_z| \sigma_z(z,e_z)\ de_z\ dv_x=\frac{2k\pi T}{m}
\exp \left(- \frac{W(z)}{kT}  \right) .
\end{equation*}

Multiplying (\ref{eqphi-a}) by $|e_z| {\sigma_z(z,e_z)}$, we get
\begin{equation}
\partial_t (|e_z|\sigma_z\phi^a) + v_x \partial_x (|e_z|\sigma_z\phi^a)
-\frac{1}{m}U'(x)\partial_{v_x}(|e_z|\sigma_z\phi^a) 
 + e_z \partial_z \phi^a = |e_z|\sigma_z I_{ph}^a[\phi],
\label{eqphi-a2}
\end{equation}
where
\begin{equation*}
I_{ph}^a[\phi] = \frac{1}{\tau_{ms}}\left(\frac{n[\phi]}{\gamma_0}M-\phi^a  \right).
\end{equation*}
Let us notice that, from (\ref{cl-}-\ref{cl+}), we have
\begin{equation}
\phi^t (t,x,z_{\pm}(e_z),v_x,e_z)=\phi^t (t,x,z_{\pm}(e_z),v_x,-e_z)=f(t,x,z_{\pm}(e_z),v_x,0). \label{condz+}
\end{equation} 

We introduce the following dimensionless quantities
\begin{equation} 
\tilde{t}=\frac{t}{t^*},\;{\tilde{\tau}_{ms}}=\frac{\tau_{ms}}{{\tau_{ms}^*}},\;\tilde{x}=\frac{x}{x^*},\;
\tilde{z}=\frac{z}{z^*},\; 
\tilde{e}_{x/z}=\frac{e_{x/z}}{v^*},\; \;
\tilde{v}_x=\frac{v_x}{v^*},\;\tilde{v}_z=\frac{v_z}{v^*}. 
\label{adimvariable*}
\end{equation}
\begin{equation}
 \tilde{f}=\frac{f}{\phi^*},\;\; 
\tilde{\phi}=\frac{\phi}{\phi^*},\;\;  
\ti{\sigma}_z= \frac{\sigma_z}{\sigma^*}, \;\;\tilde{U}=\frac{{U}}{U^*}, \tilde{W}=\frac{{W}}{U^*}  \label{adimvariable2*}
\end{equation}
where  ${\tau_{ms}^*}$ is a reference {{relaxation time}}, $t^*={\tau_{ms}^*}$, $v^*=\sqrt{(2 k T)/m}$ a reference velocity, 
$x^*=v^*{\tau_{ms}^*}$ and $z^*=L$ 
are reference lengths,  $\phi^*=n^*/{v^*}^2$, where $n^*$ is a reference number density,
$U^*=(m/2){v^*}^2$, $\sigma^*=1/v^*$. Moreover we introduce 
\begin{equation*} \ti{n}[\ti{\phi}](\ti{t},\ti{x},\ti{z})= 
\int_{\ti{v}_x} \int_{ \ti{\mathcal E}_z(\ti{z})}  \ti{\phi}(\ti{t},\ti{x},\ti{z},\ti{v}_x,\ti{e}_z)
|\ti{e}_z|\ti{\sigma}_z(\ti{z},\ti{e}_z)\ d\ti{e}_z\ d\ti{v}_x, \; \; 
\end{equation*} 
(where $\ti{\mathcal E}_z(\ti{z})= \{\ti{e}_z,\ |\ti{e}_z|> \sqrt{\ti{W}(\ti{z})} \}$ ) so that
 $\ti{n}[\ti{\phi}]=n[\phi]/n^*$, and
\begin{equation*}
\ti{M}=\exp{(-\ti{v}_x^2-\ti{e}_z^2}),\;\;\ti{\gamma}_0=\frac{\gamma_0}{{v^*}^2}= \pi \exp{(-\ti{W})}.
\end{equation*}
 Finally we assume that
\begin{eqnarray*}
\varepsilon =\frac{z^*}{x^*}<< 1,
\end{eqnarray*}
is a small parameter. Inserting (\ref{adimvariable*}-\ref{adimvariable2*}) in equation 
(\ref{eqphi-a2}) for $a=t,f$, we get
\begin{equation}
\partial_{\ti{t}} (|\ti{e}_z|\ti{\sigma}_z\ti{\phi}^a) + \ti{v}_x \partial_{\ti{x}} (|\ti{e}_z|\ti{\sigma}_z\ti{\phi}^a)
-\frac{1}{2}\ti{U}'(\ti{x})\partial_{\ti{v}_x} (|\ti{e}_z|\ti{\sigma}_z\ti{\phi}^a) + 
\frac{1}{\varepsilon} \ti{e}_z \partial_{\ti{z}} \ti{\phi}^a 
= |\ti{e}_z|\ti{\sigma}_z \ti{I}_{ph}^a[\ti{\phi}], \label{eqphiadim}
\end{equation}
where
\begin{equation*}
\ti{I}_{ph}^a[\ti{\phi}] = \frac{1}{\ti{\tau}_{ms}}
\left(\frac{\ti{n}[\ti{\phi}]}{\ti{\gamma}_0}\ti{M}-\ti{\phi}^a  \right).
\end{equation*}
To derive a "surface equation" we look for a solution of (\ref{eqphiadim}) in the following form
\begin{eqnarray}
\ti{\phi}^a&=&\ti{\phi}^{a,0} + \varepsilon \ti{\phi}^{a,1} + \varepsilon^2 \ti{\phi}^{a,2}  + ...,\label{asymexp-a}
\end{eqnarray}
where the function $\ti{\phi}^{a,i}$ are taken so that (see (\ref{condz+}))
\begin{eqnarray}
\ti{\phi}^{t,i}(\ti{t},\ti{x},\ti{z}_{\pm},\ti{v}_x,\ti{e}_z)& =&
\ti{\phi}^{t,i}(\ti{t},\ti{x},\ti{z}_{\pm},\ti{v}_x,-\ti{e}_z), \label{cond-t} \\
\ti{\phi}^{f,i}(\ti{t},\ti{x},\ti{z}_{-},\ti{v}_x,\ti{e}_z)& =&
\ti{\phi}^{f,i}(\ti{t},\ti{x},\ti{z}_{-},\ti{v}_x,-\ti{e}_z). \label{cond-f}
\end{eqnarray}
Inserting (\ref{asymexp-a}) in (\ref{eqphiadim}), we get\\

\noindent {\em at leading order}
\begin{equation*}
\ti{e}_z \partial_{\ti{z}} \ti{\phi}^{a,0}=0,
\end{equation*}
and thus $\ti{\phi}^{a,0}$ does not depend on $\ti{z}$, i.e.
\begin{equation}
\ti{\phi}^{a,0}=\ti{\phi}^{a,0}(\ti{t},\ti{x},\ti{v}_x,\ti{e}_z). \label{noz}
\end{equation}
This property together with (\ref{cond-t}-\ref{cond-f}) implies that
\begin{equation*}
\ti{\phi}^{a,0}(\ti{t},\ti{x},\ti{v}_x,\ti{e}_z)-\ti{\phi}^{a,0}(\ti{t},\ti{}x,\ti{v}_x,-\ti{e}_z)=0,
\end{equation*}
i.e. implies that $\ti{\phi}^{a,0}$ is an even function of $\ti{e}_z$.\\

\noindent {\em at order + 1}
\begin{equation*}
\partial_{\ti{t}} (|\ti{e}_z|\ti{\sigma}_z\ti{\phi}^{a,0}) + \ti{v}_x \partial_{\ti{x}} (|\ti{e}_z|\ti{\sigma}_z\ti{\phi}^{a,0})
-\frac{1}{2}\ti{U}'(\ti{x})\partial_{\ti{v}_x} (|\ti{e}_z|\ti{\sigma}_z\ti{\phi}^{a,0}) + 
 \ti{e}_z \partial_{\ti{z}} \ti{\phi}^{a,1} 
= |\ti{e}_z|\ti{\sigma}_z \ti{I}_{ph}^a[\ti{\phi^0}], \label{eqphiadim2}
\end{equation*}
and since $\ti{\phi}^{a,0}$ (and also $|\ti{e}_z|\ti{\sigma}^z\ti{\phi}^{a,0}$) is an even function 
in $\ti{e}_z$ we can take the even part and we obtain 
\begin{eqnarray*}
\partial_{\ti{t}} (|\ti{e}_z|\ti{\sigma}_z\ti{\phi}^{a,0}) + \ti{v}_x \partial_{\ti{x}} (|\ti{e}_z|\ti{\sigma}_z\ti{\phi}^{a,0})
-\frac{1}{2}\ti{U}'(\ti{x})\partial_{\ti{v}_x} (|\ti{e}_z|\ti{\sigma}_z\ti{\phi}^{a,0})
 && \nonumber \\
 + \frac{\ti{e}_z}{2} [ \partial_{\ti{z}}\ti{\phi}^{a,1}(\ti{e}_z)- \partial_{\ti{z}}\ti{\phi}^{a,1}(-\ti{e}_z) ]&=&
|\ti{e}_z|\ti{\sigma}_z \ti{I}_{ph}^a[\ti{\phi^0}]
  .\label{eqphio2}
\end{eqnarray*}
Then we integrate with respect to $\ti{z}$ and denoting  $\ti{{l}}=\ti{{l}}(\ti{e}_z)= |\ti{e}_z| \ti{\tau_z}(\ti{e}_z)$
 where $ \ti{\tau_z}(\ti{e}_z) = \int_{\ti{z}_{-}}^{\ti{z}_+} \ti{\sigma}_z(\ti{z},\ti{e}_z)\ d\ti{z}$, we get,  
\begin{eqnarray*}
\partial_{\ti{t}} (\ti{{l}}(\ti{e}_z)\ti{\phi}^{a,0}) + \ti{v}_x \partial_{\ti{x}} (\ti{{l}}(\ti{e}_z)\ti{\phi}^{a,0})
-\frac{1}{2}\ti{U}'(x)  \partial_{\ti{v}_x}(\ti{{l}}(\ti{e}_z)\ti{\phi}^{a,0})
&& \nonumber \\
+ \frac{\ti{e}_z}{2} \ [\int_{\ti{z}_{-}}^{\ti{z}_+} \partial_{\ti{z}} \ti{\phi}^{a,1}(\ti{z},\ti{e}_z)d\ti{z} - 
\int_{\ti{z}_{-}}^{\ti{z}_+}\partial_{\ti{z}} \ti{\phi}^{a,1}(\ti{z},-\ti{e}_z)d\ti{z}]
&=& \ti{Q}_{ph}^{a,*}[\ti{\phi^0}], \label{eqinteg}
\end{eqnarray*}
where 
\begin{eqnarray*}
\ti{Q}_{ph}^{a,*}[\ti{\phi^0}] &=& \frac{{\ti{{l}}}(\ti{e}_z)}{{\ti{\tau}_{ms}}}\left(\ti{\Theta}^*[\ti{\phi^0}] \ti{M}-
  \ti{\phi}^{a,0} \right),\\
\ti{\Theta}^*[\ti{\phi^0}]({\ti{t},\ti{x},\ti{e}_z})&=& \frac{1}{\ti{\tau}_z}\int_{\ti{z}_{-}}^{\ti{z}_+}
 (\ti{n}[\ti{\phi^0}](\ti{t},\ti{x},\ti{z})\ti{\sigma}_z(\ti{z},\ti{e}_z)/\ti{\gamma}_0 (\ti{z}))\ d\ti{z}
\end{eqnarray*}

But we remark that
\begin{eqnarray}
 \frac{\ti{e}_z}{2} \int_{\ti{z}_{-}}^{\ti{z}_+} \partial_{\ti{z}} (\ti{\phi}^{a,1}(\ti{z},\ti{e}_z) - 
 \ti{\phi}^{a,1}(\ti{z},-\ti{e}_z))d\ti{z}&=& \frac{\ti{e}_z}{2}(
[\ti{\phi}^{a,1}(\ti{z}_{+},\ti{e}_z)-\ti{\phi}^{a,1}(\ti{z}_{+},-\ti{e}_z) ]- \nonumber\\
&&[\ti{\phi}^{a,1}(\ti{z}_{-},\ti{e}_z)-\ti{\phi}^{a,1}(\ti{z}_{-},-\ti{e}_z)]), \label{flux}
\end{eqnarray}
Let us first consider the trapped molecules (i.e. $a=t$). From (\ref{cond-t}) we can conclude that this term vanishes. 
Thus, we obtain the following equation (in dimensionless form) for the trapped molecules
\begin{eqnarray*}
\partial_{\ti{t}} (\ti{l}(\ti{e}_z)\ti{\phi}^{t,0}) + \ti{v}_x \partial_{\ti{x}} (\ti{{l}}(\ti{e}_z)\ti{\phi}^{t,0})
-\frac{1}{2}\ti{U}'(x)  \partial_{\ti{v}_x}(\ti{{l}}(\ti{e}_z)\ti{\phi}^{t,0})
&=& \ti{Q}_{ph}^{t,*}[\ti{\phi}^0].  \label{eqphio1-2}
\end{eqnarray*}
We introduce the dimensionless 1D distribution function $\ti{g}$ which is a density number of molecules per $x$-unit 
\begin{eqnarray*}
\ti{g}^t(\ti{t},\ti{x},\ti{v}_x,\ti{e}_z)&=& \ti{l}(\ti{e}_z)\ti{\phi}^{t,0}(\ti{t},\ti{x},\ti{v}_x,\ti{e}_z), \\
\ti{g}^0(\ti{t},\ti{x},\ti{v}_x,\ti{e}_z)&=& \ti{l}(\ti{e}_z)\ti{\phi}^{t,0}(\ti{t},\ti{x},\ti{v}_x,\ti{e}_z)\ti{\chi}^t(\ti{e}_z)
+ \ti{l}(\ti{e}_z)\ti{\phi}^{f,0}(\ti{t},\ti{x},\ti{v}_x,\ti{e}_z)\ti{\chi}^f(\ti{e}_z),
\end{eqnarray*}
{where $\ti{\chi}^t(\ti{e}_z)$ is the characteristic function of the set $\{\ti{e}_z, |\ti{e}_z|\leq\sqrt{\ti{W}_m} \}$ and 
$\ti{\chi}^f(\ti{e}_z)$ is the characteristic function of the set $\{\ti{e}_z, |\ti{e}_z|>\ti{W}_m \}$.} Then the above
equation writes
\begin{eqnarray*}
\partial_{\ti{t}} \ti{g}^{t} + \ti{v}_x \partial_{\ti{x}} \ti{g}^{t}
-\frac{1}{2}\ti{U}'(x)  \partial_{\ti{v}_x}\ti{g}^{t}
&=& \ti{Q}_{ph}^{t}[\ti{g}^0]
,  \label{eqphio1-2.2}
\end{eqnarray*}
where
\begin{eqnarray*}
\ti{Q}_{ph}^{t}[\ti{g}^0] &=& \frac{1}{{\ti{\tau}_{ms}}}\left( \ti{\Theta}[\ti{g}^0] \ti{{l}}(\ti{e}_z)\ti{M}- \ti{g}^{t} \right),\\
\ti{\Theta}[\ti{g}^0](\ti{t},\ti{x},\ti{e}_z)&=& \frac{1}{{\ti{\tau}_z}(\ti{e}_z)}\int_{\ti{z}_{-}}^{\ti{z}_+} (\ti{n}[\ti{g}^0/\ti{l}](\ti{t},\ti{x},\ti{z})
\ti{\sigma}_z(\ti{z},\ti{e}_z)/\ti{\gamma}_0 (\ti{z}))\ d\ti{z}.
\end{eqnarray*}

Let us now consider the "free molecules". We first remark that (from (\ref{asymexp-a}) {and (\ref{noz})})
\[ \ti{{l}}(\ti{e}_z)\ti{\phi}^{f,0}= \ti{{l}}(\ti{e}_z)\ti{\phi}^{f}(1,\ti{e}_z) + {\mathcal O}(\varepsilon).   \]
Moreover the flux term is given by (\ref{flux}), with $a=f$ and $\ti{z}_{+}=1$ in 
dimensionless variables.
As previously the term in $\ti{z}_{-}$ vanishes because of (\ref{cond-f}) but the term in $\ti{z}_{+}=1$ does not vanish
and represents the flux of molecules between the surface layer and the bulk flow (see figure \ref{figpotar}). \\
Then (\ref{flux}) for free molecules writes 
\[ \frac{\ti{e}_z}{2}[\ti{\phi}^{f,1}(1,\ti{e}_z)-\ti{\phi}^{f,1}(1,-\ti{e}_z) ]  \]
which can be equivalently written (since $\ti{\phi}^{f,0}(1,\ti{e}_z)-\ti{\phi}^{f,0}(1,-\ti{e}_z)$=0)
\[ \frac{\ti{e}_z}{2\varepsilon}[\ti{\phi}^{f,0}(1,\ti{e}_z)+ \varepsilon\ti{\phi}^{f,1}(1,\ti{e}_z)
-(\ti{\phi}^{f,0}(1,-\ti{e}_z)+ \varepsilon\ti{\phi}^{f,1}(1,-\ti{e}_z)) ]. \]
But from (\ref{asymexp-a}), we have 
$\ti{\phi}^{f,0}+\varepsilon\ti{\phi}^{f,1} =\ti{\phi}^{f}+ {\mathcal O}(\varepsilon^2)$, so that the flux term writes
\[ \frac{\ti{e}_z}{2\varepsilon}[\ti{\phi}^{f}(1,\ti{e}_z)-\ti{\phi}^{f}(1,-\ti{e}_z) ]+ {\mathcal O}(\varepsilon), \]
and {since $\ti{\phi}^{f,0}=\ti{\phi}^{f}+ {\mathcal O}(\varepsilon)$} we have 
\begin{eqnarray*}
\partial_{\ti{t}} (\ti{{l}}(\ti{e}_z)\ti{\phi}^{f}(1,\ti{e}_z)) + \ti{v}_x \partial_{\ti{x}} (\ti{{l}}(\ti{e}_z)\ti{\phi}^{f}(1,\ti{e}_z))
-\frac{1}{2}\ti{U}'(\ti{x})  \partial_{\ti{v}_x}(\ti{{l}}(\ti{e}_z)\ti{\phi}^{f}(1,\ti{e}_z))&& \nonumber \\
+ \frac{\ti{e}_z}{2\varepsilon}[\ti{\phi}^{f}(1,\ti{e}_z)-\ti{\phi}^{f}(1,-\ti{e}_z) ]
  &=& \nonumber \\
\frac{\ti{{l}}(\ti{e}_z)}{{\ti{\tau}_{ms}}}\left(\ti{\Theta}^*[\ti{\phi}^{t,0}{\ti{\chi}^t}+\ti{\phi}^{f}\ti{\chi}^f] \ti{G}-  
\ti{\phi}^{f}(1,\ti{e}_z) \right)+ {\mathcal O}(\varepsilon), \label{eqinteg-f3}
\end{eqnarray*}

Using (\ref{condL-}) and denoting (with some abuse of notation), 
\[\ti{\phi}^f(\ti{t},\ti{x},\ti{v}_x,\ti{e}_z)=\ti{\phi}^f(\ti{t},\ti{x},1,\ti{v}_x,\ti{e}_z),\] 
the equation of the free molecules finally writes
\begin{eqnarray*}
\partial_{\ti{t}} (\ti{{l}}(\ti{e}_z)\ti{\phi}^{f}) + \ti{v}_x \partial_{\ti{x}} (\ti{{l}}(\ti{e}_z)\ti{\phi}^{f})
-\frac{1}{2}\ti{U}'(\ti{x})  \partial_{\ti{v}_x}(\ti{{l}}(\ti{e}_z)\ti{\phi}^{f})&& \nonumber \\
+ \frac{|\ti{e}_z|}{2\varepsilon}[\ti{\phi}^{f}(|\ti{e}_z|)-\ti{f}(-|\ti{v}_z(1,\ti{e}_z)|) ]
  &=& \nonumber \\
\frac{\ti{{l}}(\ti{e}_z)}{{\ti{\tau}_{ms}}}\left( \ti{\Theta}^*[\ti{\phi}^{t,0}{\ti{\chi}^t}+{\ti{\phi}^{f}}\ti{\chi}^f]  \ti{G}-  
\ti{\phi}^{f} \right)+ {\mathcal O}(\varepsilon). \label{eqinteg-f4}
\end{eqnarray*}
Introducing as previously 
\begin{eqnarray*}
\ti{g}^f(\ti{t},\ti{x},\ti{v}_x,\ti{e}_z)&=& \ti{l}(\ti{e}_z)\ti{\phi}^{f}(\ti{t},\ti{x},\ti{v}_x,\ti{e}_z), \\
\ti{g}(\ti{t},\ti{x},\ti{v}_x,\ti{e}_z)&=& \ti{g}^{t}(\ti{t},\ti{x},\ti{v}_x,\ti{e}_z)\ti{\chi}^t(\ti{e}_z)
+ \ti{g}^{f}(\ti{t},\ti{x},\ti{v}_x,\ti{e}_z)\ti{\chi}^f(\ti{e}_z),
\end{eqnarray*}
this equation writes
\begin{equation*}
\partial_{\ti{t}}\ti{g}^{f} + \ti{v}_x \partial_{\ti{x}} \ti{g}^{f}
-\frac{1}{2}\ti{U}'(\ti{x})  \partial_{\ti{v}_x}\ti{g}^{f}
+ \frac{{1}}{2\varepsilon\ {\ti{\tau}_z}}[\ti{g}^{f}(|\ti{e}_z|)-{\ti{l}(\ti{e}_z})\ti{f}(-|\ti{v}_z(1,\ti{e}_z)|) ]
  = \ti{Q}_{ph}^f[\ti{g}] + {\mathcal O}(\varepsilon), \label{eqinteg-f4-bis}
\end{equation*}
where $\ti{Q}_{ph}^f[\ti{g}]= \frac{1}{{\ti{\tau}_{ms}}}\left( \ti{\Theta}[\ti{g}]\ti{l}(\ti{e}_z)  \ti{M}- \ti{g}^{f} \right) $.
Let us remark that $\ti{\Theta}[\ti{g}^0]
=\ti{\Theta}[\ti{g}]+ {\mathcal O}(\varepsilon)$, 
so that the equation for trapped molecules writes
\begin{eqnarray*}
\partial_{\ti{t}} \ti{g}^{t} + \ti{v}_x \partial_{\ti{x}} \ti{g}^{t}
-\frac{1}{2}\ti{U}'(x)  \partial_{\ti{v}_x}\ti{g}^{t} &=& \ti{Q}_{ph}^t[\ti{g}]
+ {\mathcal O}(\varepsilon),  \label{eqphio1-3}
\end{eqnarray*}
Denoting $\ti{f}^s(\ti{e}_z)=\ti{f}(-|\ti{v}_z(1,\ti{e}_z)|)$  and neglecting ${\mathcal O}(\varepsilon)$ terms in the above 
equations for free and trapped molecules, we finally obtain the following 
system of coupled equations (in dimensionless form)
\begin{eqnarray*}
\partial_{\ti{t}} \ti{g}^{t} + \ti{v}_x \partial_{\ti{x}} \ti{g}^{t}
-\frac{1}{2}\ti{U}'(x)  \partial_{\ti{v}_x}\ti{g}^{t}
&=& \ti{Q}_{ph}^t[\ti{g}], \\
\partial_{\ti{t}}\ti{g}^{f} + \ti{v}_x \partial_{\ti{x}} \ti{g}^{f}
-\frac{1}{2}\ti{U}'(\ti{x})  \partial_{\ti{v}_x}\ti{g}^{f}
+ \frac{1}{2\varepsilon \ti{\tau}_z}[\ti{g}^{f}(|\ti{e}_z|)-\ti{l}\ti{f}^s ]
  &=& \ti{Q}_{ph}^f[\ti{g}].
\end{eqnarray*}
Now we come back to dimensional quantities, noticing that ${\tau_{z}^*=z^*/v^*=\varepsilon \tau_{ms}^*}$  and we define
\begin{eqnarray*}
{g^t(t,x,v_x,e_z)}&=&{\frac{n^*z^*}{{v^*}^2}\ti{g}^t(\ti{t},\ti{x},\ti{v}_x,\ti{e}_z),} \\
{g^f(t,x,v_x,e_z)}&=&{\frac{n^*z^*}{{v^*}^2}\ti{g}^f(\ti{t},\ti{x},\ti{v}_x,\ti{e}_z),} \\
{g(t,x,v_x,e_z)}&=&{g^t(t,x,v_x,e_z)\chi^t(e_z)+g^f(t,x,v_x,e_z)\chi^f(e_z).}
\end{eqnarray*}
We obtain finally the system governing the flow of molecules

\begin{proposition}
Under hypothesis (\ref{simplepot1}-\ref{simplepot2}-\ref{nocoll}), in the limit of 
a small surface layer ($\varepsilon=\frac{L}{x^*} \rightarrow 0$), the flow of 
molecules can be described by the following multi-phase kinetic model  
\begin{eqnarray}
\partial_{t}f+v_x\partial_{x}f+v_z\partial_{z}f &=& 0, \label{beq} \\
\partial_{t}g^t +v_x\partial_{x}g^t
-\frac{1}{m}U'(x)\partial_{v_x}g^t &=& {Q}_{ph}^t, 
\label{kin-surf-trapped3}\\
\partial_{t}g^f +v_x\partial_{x}g^f
-\frac{U'(x)}{m}\partial_{v_x}g^f
- \frac{{l}\ {f}^s- {g}^{f}(|e_z|)}{2{\tau}_z}
&=&{Q}_{ph}^f,\label{kin-surf-free2}   \\
{l}({e}_z)f(t,x,L,v_x,v_z)_{|v_z>0}&=& g^f(t,x,v_x,e_z(L,v_z)), 
\label{cl-coupling}
\end{eqnarray}
where 
\begin{eqnarray*}
Q_{ph}^a&=& \frac{1}{\tau_{ms}}\left(
\Theta[g]{l}({e}_z){M}  -g^{a}   \right),\;\; a=t,f\\
\Theta[g](t,x,e_z)&=&\frac{1}{{\tau}_z(e_z)}
\int_{z_{-}(e_z)}^{z_{+}(e_z)} (n[g/l](t,x,z)\sigma_z(z,e_z) /\gamma_0(z))dz, \\
{f^s(t,x,v_x,e_z)}&=& {f}(t,x,L,v_x,-|v_z(L,e_z)|),\\
l(e_z)&=& |e_z|\ \tau_z(e_z).
\end{eqnarray*}
\end{proposition}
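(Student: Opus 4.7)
The plan is to derive the system by splitting the domain at $z=L$, treating the bulk region with the free transport equation and then performing an inner asymptotic expansion in the surface layer in the energy variable $e_z$. Since by (\ref{simplepot1})--(\ref{simplepot2}) the potential is constant for $z\ge L$ and $I_{ph}=0$ there, equation (\ref{basiceq}) with (\ref{nocoll}) reduces directly to (\ref{beq}) in the bulk. The coupling condition (\ref{cl-coupling}) will come from matching the inner expansion to the outer solution at $z=L$, where (\ref{condL+}) identifies $f(\cdot,L,v_x,v_z)$ for $v_z>0$ with the free-molecule profile $\phi^f$ evaluated at $e_z(L,v_z)$, rescaled by $l(e_z)$ to turn the density in $(z,v_z)$ into a density in $(x,v_x,e_z)$.

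For the surface layer I would use the change of variables $v_z\mapsto e_z$ already set up in the paper, splitting the distribution into the trapped/free pieces $\phi^t,\phi^f$ defined through (\ref{defphi-t})--(\ref{defphi-f}), enforcing the natural gluing conditions (\ref{cond-t})--(\ref{cond-f}) at the turning points so that $f$ remains single-valued on $\{v_z=0\}$ and at $z_{-}(e_z)$. Next I would multiply the evolution equation (\ref{eqphi-a}) by $|e_z|\sigma_z(z,e_z)$ to express transport in the conservative form (\ref{eqphi-a2}), nondimensionalize via (\ref{adimvariable*})--(\ref{adimvariable2*}) to isolate the small parameter $\varepsilon=L/x^*$, and substitute the Hilbert ansatz (\ref{asymexp-a}). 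The $\varepsilon^{-1}$ balance forces $\partial_{\tilde z}\tilde\phi^{a,0}=0$, and combined with (\ref{cond-t})--(\ref{cond-f}) this yields that $\tilde\phi^{t,0}$ is even in $\tilde e_z$ (and similarly for $\tilde\phi^{f,0}$ at $\tilde z_-$).

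At order $\varepsilon^0$ I take the even part in $\tilde e_z$, integrate over $[\tilde z_-(\tilde e_z),\tilde z_+(\tilde e_z)]$, and recognize that the integral of $|\tilde e_z|\tilde\sigma_z$ gives $\tilde l(\tilde e_z)$ while the right hand side averages the Maxwellian source into the quantity $\tilde\Theta$. The flux term produced by integrating $\tilde e_z\partial_{\tilde z}\tilde\phi^{a,1}$ collapses by the fundamental theorem of calculus to the boundary expression (\ref{flux}). For trapped molecules both boundary contributions vanish thanks to (\ref{cond-t}), and introducing $g^t=l\phi^{t,0}$ yields (\ref{kin-surf-trapped3}). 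For free molecules the $\tilde z_-$ contribution still vanishes by (\ref{cond-f}), but the contribution at $\tilde z_+=1$ is nontrivial; using that $\tilde\phi^{f,0}+\varepsilon\tilde\phi^{f,1}=\tilde\phi^f+O(\varepsilon^2)$, I rewrite this boundary term as the jump $\tfrac{\tilde e_z}{2\varepsilon}[\tilde\phi^f(1,\tilde e_z)-\tilde\phi^f(1,-\tilde e_z)]+O(\varepsilon)$ and then use the matching relation (\ref{condL-}) to replace $\tilde\phi^f(1,-|\tilde e_z|)$ by the incoming bulk trace $\tilde f^s$. Rescaling back via $\tau_z^*=\varepsilon\tau_{ms}^*$ converts the factor $|\tilde e_z|/(2\varepsilon)$ into $1/(2\tau_z)$ and produces exactly (\ref{kin-surf-free2}).

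The main obstacle is the free-molecule flux at $z=L$: the formal expansion only provides $\tilde\phi^{f,1}$, which is not directly controlled, so one must argue that the combination $\tilde\phi^{f,0}+\varepsilon\tilde\phi^{f,1}$ equals the physical trace $\tilde\phi^f$ up to $O(\varepsilon^2)$ and that the $1/\varepsilon$ singularity in the jump becomes a genuine $O(1)$ exchange term after applying the matching condition (\ref{condL-})--(\ref{condL+}) with the bulk. Once this is handled, the remaining steps---collecting $\tilde\Theta[\tilde\phi^{t,0}\tilde\chi^t+\tilde\phi^f\tilde\chi^f]=\tilde\Theta[\tilde g/\tilde l]+O(\varepsilon)$, returning to dimensional variables, and reading off the definitions of $Q_{ph}^t,Q_{ph}^f,\Theta[g],f^s$---are bookkeeping, and the system (\ref{beq})--(\ref{cl-coupling}) follows.
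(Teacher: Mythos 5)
Your proposal is correct and follows essentially the same route as the paper: bulk reduction to free transport, the $v_z\mapsto e_z$ change of variables with the trapped/free splitting, the conservative form obtained by multiplying by $|e_z|\sigma_z$, the Hilbert expansion in $\varepsilon=L/x^*$ with evenness at leading order, integration across the layer turning the $\partial_{\tilde z}\tilde\phi^{a,1}$ term into turning-point fluxes, and the reconstruction $\tilde\phi^{f,0}+\varepsilon\tilde\phi^{f,1}=\tilde\phi^f+{\mathcal O}(\varepsilon^2)$ together with (\ref{condL-})--(\ref{condL+}) to produce the exchange term $({l}{f}^s-g^f)/(2\tau_z)$ and the coupling condition (\ref{cl-coupling}). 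You also correctly single out the one genuinely delicate step (the $1/\varepsilon$ boundary flux for free molecules becoming an ${\mathcal O}(1)$ term via $\tau_z^*=\varepsilon\tau_{ms}^*$), which is exactly where the paper's argument concentrates.
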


\begin{remark}
\noindent
\begin{enumerate}
\item This model can be interpreted as a multiphase model. The first phase, constituting of the bulk  
flow of molecules outside the range of 
the surface forces, is described by a usual kinetic equation (\ref{beq}). The molecules within the range 
of the surface forces are considered as a separate phase described by a two energy-group kinetic model, 
with the
low energy "trapped molecules" (\ref{kin-surf-trapped3}) and the high energy "free molecules" 
(\ref{kin-surf-free2}). The two groups are coupled by the collision terms. The two phases are 
coupled by the relation (\ref{cl-coupling}) and the last term of the left-hand-side of equation 
(\ref{kin-surf-free2}).
 Let us notice that equations (\ref{kin-surf-trapped3}) and  (\ref{kin-surf-free2}) do not give a precise
description of the flow of the surface molecules with respect to the distance $z$ to the 
surface, but give a relevant information of the flow parallel to the surface, and therefore
will be useful for evaluating the transport flux in this direction. 

\item The distribution function $f(t,x,z,v_x,e_z)$ describes the number density of gas molecules with velocity 
($v_x$,$v_z$) 
in a unit $(x,z)$-area, but the  distribution functions
$g^t(t,x,v_x,e_z)$ and $g^f(t,x,v_x,e_z)$
describe a number density of surface molecules with velocity ($v_x$, $e_z$) per unit x-length.
As a 
consequence $\int_0^L n[\phi](t,x,z)dz $ is the number density of surface molecules per unit x-length. Moreover
we can easily verify that 
\[\int_{e_z}\int_{v_x}Q_{ph}(v_x,e_z)\ dv_x\ de_z  =0,\]
 which ensures the local conservation of mass for surface molecules.

\item In this multiphase model, the "surface" of the solid part is identified to the interface $z=L$ between the
surface layer and the bulk flow. Equation (\ref{beq}) describes the gas flow and equations 
(\ref{kin-surf-trapped3}-\ref{kin-surf-free2}) give a simplified description of what can be interpreted
as a motion of molecules on the "surface". The condition (\ref{cl-coupling}) can be
interpreted as a non local boundary condition  for the bulk flow giving a description of the interaction 
between the gas molecules and the wall which is more detailed than usual local boundary conditions. More precisely, 
some molecules go from the bulk flow into the surface layer, are sent back by the repulsive interaction potential and immediately 
escape from the surface layer into the bulk flow, giving a specular reflexion. On the other hand, some molecules can go from the bulk flow 
into the surface layer, can have a collision with phonons and lose enough transverse energy so that they are trapped 
inside the boundary layer (where they are transported according to (\ref{kin-surf-trapped3})) and, after some time, can gain in a 
collision with phonons enough energy to escape from the surface layer. Since $\tau_{ms}$ is small compared with the characteristic
time of evolution of the bulk flow, this complicated interaction with the solid surface described by the multiphase model 
can be interpreted in a first approximation as a reflexion of the molecule by the boundary.
\end{enumerate}
\end{remark}

\section{1D-mesoscopic kinetic equation for surface molecules}

\subsection{Introduction}
	In section 2 we derived a multiphase model describing the coupling of the bulk flow with the
motion of molecules on the surface. This motion is given by a set of two coupled kinetic equations 
on variable $x$, including a Vlasov term due to the interaction potential parallel to the surface. 
In many applications this potential field is periodic with a small period $2\delta$ that could be
much smaller than the characteristic distance $x^*$, but nevertheless, for consistency with the asymptotic analysis 
leading to theorem 1, we assume that 
\begin{equation}
L<< \delta << x^*. \label{Ldeltaxstar}
\end{equation} 
Some of the surface molecules have a small
tangential energy so that they are {\em bound} in a potential well where they are oscillating very rapidly. 
At the scale $x^*$ they can be considered as motionless. On the contrary surface molecules with a 
high tangential energy are {\em unbound} and can move across the potential wells, but their velocity strongly oscillate. 
In so 
far as we are interested in the mass flow along the surface over a domain much longer than $\delta$ 
we do not need to get information on the distribution function at the "microscopic" scale of a potential 
unit cell. Thus it is useful to derive a kinetic model at a "mesoscopic" scale larger than $\delta$ but 
comparable to $l_{mp}$ the
mean free path between two "collisions" with phonons. At this scale  a kinetic model describing the motion
of the unbound molecules by their average velocity over the potential cells is relevant. In this section we
derive such a model in a simple configuration where we assume that the free molecules in the
surface layer can be neglected so that all surface molecules are trapped. This is justified
when  $W_m>>kT$, which is true for some practical situations,
and only very few molecules can escape from the interaction range of the normal interaction potential 
$W$ so that $n[g^t+g^f] \approx n [g^t]$. In such situations it is reasonable to assume that 
\begin{equation} W_m=+\infty,  \label{Wminf0} \end{equation}
 so that when a molecule enters the surface layer, it cannot escape. Moreover we neglect the
flux of incoming molecules. This model is derived from the kinetic equation (\ref{kin-surf-trapped3}) 
by an asymptotic analysis when $\delta/x^* \rightarrow 0$.  For this asymptotic analysis we
consider the  hypothesis (\ref{simplepot1}-\ref{simplepot2}-\ref{nocoll}), as above. Moreover we assume
that the molecule-phonon relaxation time is constant i.e.
\begin{equation}
 \tau_{ms}= const \label{tauconst}
\end{equation}
and that $U$ (the tangential part of the potential) is periodic with period 
$2 \delta$. More precisely  
\begin{equation}
U(x)=\hat{U}(x/\delta), \label{Uperiodic1}
\end{equation} 
where $\hat{U}(y)$ is a periodic potential with period 2 defined on $[-1,+1]$ and such that $0 \leq \hat{U}(y) 
\leq U_m$ and $\hat{U}(\pm 1)=U_m$.\\

Before to derive such a model we need to introduce some notations related to the motion of a 
molecule in a periodic potential field. We denote
\begin{equation*}
\mbox{for } v_x \neq 0,\;  e_x=e_x(x,v_x)=sgn (v_x)\sqrt{v_x^2+\frac{2}{m}U(x)}.
\end{equation*}
For any fixed $x$ the application $v_x \rightarrow e_x$ is a one to one application from $[0,+\infty[$ onto 
$[\sqrt{2U(x)/m}, +\infty[$,
and also $v_x \rightarrow e_x$ is a one to one application from $]-\infty,0]$ onto $]-\infty,-\sqrt{2U(x)/m}]$ 
so that we have
\begin{equation*}
\mbox{for } |e_x|>\sqrt{2U(x)/m},\  v_x(x,e_x)= sgn (e_x) \sqrt{e_x^2-\frac{2}{m}U(x)}.
\end{equation*}
The jacobian of the application $e_x \rightarrow v_x$ (for $e_x >0$ for instance) is given by
\begin{equation*}
\mbox{for } v_x > 0, \ dv_x=|e_x|\sigma_x(x,e_x)de_x,
\end{equation*}
where
\begin{equation*}
\sigma_x(x,e_x)= (e_x^2-\frac{2}{m}U(x))^{-1/2}, \; \;  \mbox{for } |e_x|>\sqrt{2U(x)/m}.                   
\end{equation*}
and in the same way
\begin{equation*}
\mbox{for } v_x < 0, \ dv_x=|e_x|\sigma_x(x,e_x)de_x.
\end{equation*}
Then denoting 
\begin{equation*}
{\mathcal E}_x(x)= \{e_x,\; |e_x|>(2U(x)/m)^{1/2}   \}, 
\end{equation*}
we have for every integrable function $\psi(v_x)$, 
\[  \int_{v_x}   \psi(x,v_x)  dv_x =  
  \int_{{\mathcal E}_x(x)}  \psi(x,v_x(x,e_x))|e_x|\sigma_x(x,e_x)\  de_x. \] 

The trajectories of the molecules in the $(x,v_x)$ plane
are the level curves of the the total energy $E(x,v_x)=me_x^2/2$ (see fig \ref{pot2}). If the energy of a 
molecule is less than $U_m$, then
its trajectory is a closed curve ({\em bound molecule}), the molecule is trapped in a potential well. 
On the other hand, if its energy is 
larger than $U_m$ its trajectory is an open curve ({\em unbound molecule}) and those unbound molecules
generate a flow in the x-direction.\\

Since the tangential potential $U$ is periodic with a small period $2\delta$,
the velocity $ v_x(x,e_x)= sgn (e_x)\sqrt{e_x^2-\frac{2}{m}U(x)}$ is rapidly oscillating.  
If we look at the surface molecules on a space scale $x^*>>\delta$, the average velocity of the bound molecules 
is equal to zero. On the other hand the average velocity of an unbound molecule can be obtained as follows.
We introduce $-1 \leq y_-(e_x) \leq y_+(e_x)\leq +1$, defined by
\begin{eqnarray}
{\hat{U}}(y_{\pm}(e_x))&=& \frac{m}{2}e_{x}^2,\;\; \mbox{for}\; e_x^2\leq\frac{2}{m}{U}_m  \label{ypmb}, \\
y_{\pm}(e_x)&=& \pm 1,\; \; \mbox{for}\; e_x^2>\frac{2}{m}{U}_m, \label{ypmu}
\end{eqnarray} 
\begin{figure}
\begin{center}
\includegraphics[height= 4. cm, angle=0]{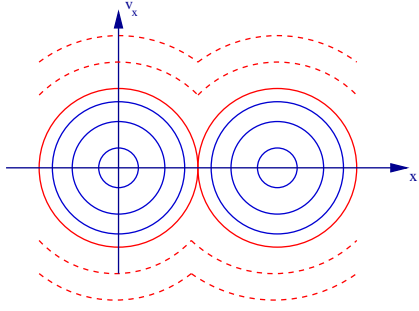}\\
\vspace*{0.3 cm}
\caption{ trajectories in the phase space $(x,v_x)$ for $\hat{U}(y)=k y^2/2.$\label{pot2} }
\end{center}
\end{figure}
and we define for $y \in [-1,+1]$, 
\begin{eqnarray*}
   v_x^{\#}(y,e_x) &=& sgn(e_x) \sqrt{e_x^2-\frac{2}{m}\hat{U}(y)},\;\;\; \mbox{if}\  e_x^2 > \frac{2U_m}{m},   \\ 
  v_x^{\#}(y,e_x) &=& sgn(e_x) \sqrt{e_x^2-\frac{2}{m}\hat{U}(y)},\;\;\; \mbox{if}\ e_x^2 \leq \frac{2U_m}{m}\ \mbox{and}\ y\in [y_-(e_x),y_+(e_x)],\\
  v_x^{\#}(y,e_x) &=&0,\;\;\; \mbox{if}\ e_x^2 \leq \frac{2U_m}{m}\ \mbox{and}\ y\notin [y_-(e_x),y_+(e_x)],\
\end{eqnarray*}
and
\begin{equation*}
\sigma_x^{\#}(y,e_x)=\frac{sgn(e_x)}{v_x^{\#}(y,e_x)},
\end{equation*}
so that $\sigma_x^{\#}(y,e_x)=+\infty$, for $y\notin\ [y_-(e_x),y_+(e_x)]$. {Finally}, we introduce 
$\overline{\sigma}_x(e_x)$ defined by
\begin{eqnarray*}
\overline{\sigma}_x(e_x) &=& \frac{1}{2}\int_{y_-(e_x)}^{y_+(e_x)} \sigma_x^{\#}(y,e_x)dy
=\frac{1}{2} \int_{y_-(e_x)}^{y_+(e_x)}(e_x^2-\frac{2}{m}\hat{U}(y))^{-1/2}dy.
\end{eqnarray*}

Let us consider now an unbound molecule moving in the one dimensional periodic field $U(x)=
\hat{U}(\frac{x}{\delta})$ with an "equivalent velocity" $e_x $.
The position $x$ of the molecule is a monotonic function of $t$, so that we can
consider also $t=t(x)$ as a monotonic function of $x$. Since
\[ dx= (e_x^2-\frac{2}{m}\hat{U}(\frac{x}{\delta}))^{1/2} dt,\]
we have
\begin{equation*}
dt= (e_x^2-\frac{2}{m}\hat{U}(\frac{x}{\delta}))^{-1/2} dx,
\end{equation*}
and the "time of flight", necessary for the considered molecule to cross a potential well (i.e. for going
from $x=(i-1)\delta$ to $x=(i+1)\delta$) is 
\begin{eqnarray*}
\tau_{fl}(e_x) &=& \int_{(i-1) \delta}^{(i+1)\delta}
(e_x^2-\frac{2}{m}\hat{U}(\frac{x}{\delta}))^{-1/2} dx 
= \delta \int_{-1}^{+1}(e_x^2-\frac{2}{m}\hat{U}(y))^{-1/2} dy, \\
&=& \delta \int_{-1}^{+1} \sigma_x^{\#}(y,e_x) dy =  \frac{2 \delta}{|w_x(e_x)|},
\label{meant}
\end{eqnarray*}
where the mean velocity $w_x(e_x)$ of the molecule is given by
\begin{equation*}
{w_x}(e_x) \stackrel{def}{=} \frac{2\ sgn(e_x)\ \delta}{\tau_{fl}(e_x)}.
\end{equation*}
Let us notice that bound molecules, with an "equivalent velocity" $e_x$ with  $me_x^2 \leq 2U_{m}$, are
trapped in a potential well,  between  $(i+y_-(e_x)\delta)$ and $(i+y_+(e_x)\delta)$. Thus for such a molecule,  
the time necessary to cross a potential well can be considered as infinite and by consequence the average velocity 
on a large scale $x^*>> \delta $ can be considered as null, which is consistent with the above definitions of
$\sigma_x^{\#}(y,e_x)$ and of $w_x$. \\

\subsection{Homogenization of the surface kinetic model}
To describe the flow in the {$x$}-direction induced by unbound molecules
we start from the kinetic equation (\ref{kin-surf-trapped3}) for trapped surface molecules, in which we express 
the distribution functions in term of the variable $e_x,\ e_z$, i.e. 
we introduce $h^t$ given  by
\begin{equation*}
   h^t(t,x,e_x,e_z)\stackrel{\rm def}{=}g^t(t,x,v_x(x,e_x),e_z). \label{dfh}
\end{equation*}
 Moreover we denote 
$h^t_c(t,x,e_x,e_z)=h^t(t,x,e_x,e_z)\chi_c(e_x),\ c=b,u$,
where $\chi_b(e_x)$ is the characteristic function of the set $\{e_x,\ |e_x|\leq \sqrt{2U_m/m}\}$ and 
$\chi_u(e_x)$ is the characteristic function of the set $\{e_x,\ |e_x| > \sqrt{2U_m/m}\}$.
From the definition of $y_{\pm}(e_x)$ (\ref{ypmb}-\ref{ypmu}), it comes for bound molecules 
\begin{equation}
h^t_b(t,\delta y_{\pm}(e_x),e_x,e_z)=h^t_b(t,\delta y_{\pm}(-e_x),-e_x,e_z)=g^t(t,\delta y_{\pm}(e_x),0,e_z). 
\label{trappedper}
\end{equation}

Let us recall that the evolution of the surface molecules is described by equations (\ref{kin-surf-trapped3}) which can 
be written
\begin{eqnarray*}
\partial_{t}g^t +v_x\partial_{x}g^t -\frac{1}{m}U'(x)\partial_{v_x}g^t
&=&{Q}_{ph}^t,\label{kin-surf-free3} 
\end{eqnarray*}

From the definition of $h^t$, we get 
$\partial_t {h^t} = \partial_t g^t$, and $\partial_x {h^t} = 
\partial _x g^t+ \partial_{v_x}g^t \partial_xv_x$, 
or
$\partial_x {h^t}= \partial_x g^t -\frac{U'(x)}{mv_x(x,e_x)}\partial_{v_x}g^t
$, 
so that
\begin{equation*}
v_x(x,e_x)\partial_x {h^t} = v_x(x,e_x)\partial_x {g^t}
-\frac{1}{m}U'(x) \partial_{v_x}g^t, 
\end{equation*}
and finally the distribution function $h^t(t,x,e_x,e_z)$  satisfies the following kinetic equation 
\begin{eqnarray}
\partial_t h^t+ v_x(x,e_x)\partial_x h^t &=& 
\frac{1}{\tau_{ms}}\left(\Theta[h^t]{l}(e_z)M-h^t \right), \label{eqht} 
\end{eqnarray}
where
\begin{eqnarray*}
\Theta[h]&=&\frac{1}{{\tau}_z(e_z)}\int_{z_{-}(e_z)}^{z_{+}(e_z)} (n[h/l]\sigma_z(z,e_z) /\gamma_1(x,z))dz,\\
n[\varphi] &=& \int_{{\mathcal E}_z(z)} \int_{{\mathcal E}_x(x)} \varphi(t,x,e_x,e_z) |e_x|\sigma_x(x,e_x)|e_z|\sigma_z(z,e_z)\ 
de_x\ de_z,\\
M(e_x,e_z)&=&e^{-m(e_x^2+e_z^2)/(2kT)}, \\
\gamma_1(x,z)&=&\frac{2\pi kT}{m}\exp \left(- \frac{U(x)+W(z)}{kT} \right).
\end{eqnarray*}

To derive a kinetic
model at a "mesoscopic scale" (i.e. on a characteristic length $x^*>> \delta$), we introduce the following 
dimensionless quantities
\begin{equation} 
\tilde{x}=\frac{x}{x^*},\; \tilde{l}=\frac{{l}}{z^*},\; \tilde{v}_x=\frac{v_x}{v^*},\; \tilde{e}_{x/z}=\frac{e_{x/z}}{v^*},\;  
\tilde{t}=\frac{t}{t^*},\;  \ti{\tau}= \frac{\tau_{ms}}{t^*},\;
\; \ti{n}=\frac{n}{n^*}, \;
\label{adimvariable1}
\end{equation}
and 
\begin{equation} 
\ti{\Theta}=\frac{\Theta\ {{v^*}^2}}{n^*}, \;
 \tilde{h}^t=\frac{h^t}{h^*},\; \ti{\sigma}_x= \frac{\sigma_x}{\sigma^*},\;\ti{\sigma}_x^{\#}= \frac{\sigma_x^{\#}}{\sigma^*},\;
 \tilde{U}=\frac{\hat{U}}{U^*},\;\; \tilde{W}=\frac{\hat{W}}{U^*},
\label{adimvariable2}
\end{equation}
where $x^*$ is a reference length, $v^*=\sqrt{2kT/m}$, $t^*= x^*/v^*$ is a reference time,   
$U^*=(m/2){v^*}^2$, $n^*$ is a reference number density, $h^*=(n^* z^*)/{v^*}^2$, $\sigma^*=1/v^*$,  so that
\[ \tilde{v}_x(\ti{x},\ti{e}_x)=sgn(\tilde{e}_x)\sqrt{\tilde{e}_x^2-\tilde{U}(\tilde{x}x^*/\delta)}. \]
Moreover we assume that the reference length $x^* >> \delta$, so that the ratio  
\begin{equation*}
\varepsilon = \frac{\delta}{x^*} << 1
\end{equation*}
is a small parameter. We now derive an asymptotic model in the limit $\varepsilon \rightarrow 0$, and we describe 
the asymptotic expansion for the trapped molecules.  \\

Inserting (\ref{adimvariable1}-\ref{adimvariable2}) in the kinetic equation (\ref{eqht}) we obtain
\begin{equation*}
\partial_{\tilde{t}}(\tilde{h}^t)+ sgn(\tilde{e}_x)
\sqrt{\tilde{e}_x^2-\tilde{U}(\tilde{x}/\varepsilon)}\partial_{\tilde{x}}(\tilde{h}^t) =
 \frac{1}{\ti{\tau}}\left(\ti{\Theta}[\ti{h}^t]\ti{l}\ti{M}-\ti{h}^t  \right), \label{adimeqht}
\end{equation*}
and we look for a solution in the following form
\begin{eqnarray}
\tilde{h}^t(\tilde{t},\tilde{x},\frac{\tilde{x}}{\varepsilon} ,\tilde{e}_x,\tilde{e}_z)=
 \tilde{h}^{t,0}(\tilde{t},\tilde{x},
\frac{\tilde{x}}{\varepsilon},\tilde{e}_x,\tilde{e}_z)+ \varepsilon 
\tilde{h}^{t,1}(\tilde{t},\tilde{x},\frac{\tilde{x}}{\varepsilon},\tilde{e}_x,\tilde{e}_z)+... \label{exph}
\end{eqnarray}
where  $\ti{h}^{t,i}(\ti{t},\ti{x},y,\ti{e}_x,\ti{e}_z), \; i=0,1,...$ are periodic functions in $y$ with period $2$ and
where, for bound molecules (to be consistent with (\ref{trappedper}) and the definition of $y_{-}(e_x)$ and $y_{+}(e_x)$) 
\begin{equation}  
\ti{h}^{t,i}_b(\ti{t},\ti{x},y_{\pm}(\ti{e}_x),\ti{e}_x,\ti{e}_z)
= \ti{h}^{t,i}_b(\ti{t},\ti{x},y_{\pm}(-\ti{e}_x),-\ti{e}_x,\ti{e}_z).
\label{trappedperadim}
\end{equation}

Then ${\tilde{h}^{t}}$ satisfies
\begin{equation}
\partial_{\tilde{t}}({\tilde{h}^{t}})+ sgn(\tilde{e}_x)
\sqrt{\tilde{e}_x^2-\tilde{U}(\tilde{x}/\varepsilon)}\ [\partial_{\tilde{x}}+\frac{1}{\varepsilon}\partial_y]({\tilde{h}^{t}}) =
 \frac{1}{\ti{\tau}}\left(\ti{\Theta}[{\ti{h}^t}]\ti{l}\ti{M}-\ti{{h}}^t  \right), \label{adimeqhty}
\end{equation}
where
\begin{eqnarray*}
\ti{\Theta}[{\ti{h}}^t]&=&\frac{1}{\ti{{\tau}}_z}\int_{\ti{z}_{-}(\ti{e}_z)}^{\ti{z}_{+}(\ti{e}_z)} (\ti{n}[{\ti{h}}^t/\ti{l}]\ \ti{\sigma}_z(\ti{z},\ti{e}_z) /\ti{\gamma_1}({\ti{x}/\varepsilon},\ti{z}))
d\ti{z},\\
\ti{n}[\ti{\varphi}^*] &=& \int_{\ti{\mathcal E}_z(\ti{z})} \int_{\ti{\mathcal E}_x^{\#}(y)}  \ti{\varphi}^*(\ti{t},\ti{x},y,\ti{e}_x,\ti{e}_z) 
|\ti{e}_x|\ti{\sigma}_x^{\#}(y,\ti{e}_x)
|\ti{e}_z|\ti{\sigma}_z(\ti{z},\ti{e}_z)\  d\ti{e}_x\ d\ti{e}_z. \;  \\
\ti{\mathcal E}_z(\ti{z}) &=& \{\ti{e}_z, |\ti{e}_z|>{\sqrt{\ti{W}(\ti{z})}}   \},\\
\ti{\mathcal E}_x^{\#}(y) &=& \{\ti{e}_x, |\ti{e}_x|>{\sqrt{\ti{U}(y)}}  \}\\
\ti{\gamma}_1(y,\ti{z}) &=& \pi \exp \left(- \ti{U}(y)-\ti{W}(\ti{z})  \right) .  
\end{eqnarray*}
Moreover since
\begin{eqnarray}
\tilde{v}_x(\tilde{x},\tilde{e}_x)&=& sgn(\tilde{e}_x)
\sqrt{\tilde{e}_x^2-\tilde{U}(\tilde{x}/\varepsilon)} =
\tilde{v}_x^{\#}(\frac{\tilde{x}}{\varepsilon},\tilde{e}_x),\label{expu}
\end{eqnarray}
inserting (\ref{exph}, \ref{expu}) in (\ref{adimeqhty}) and balancing order by order in $\varepsilon$ we get
at the principal order,
\begin{equation*}
\tilde{v}_x^{\#}\partial_y\tilde{h}^{t,0} =0,
\end{equation*}
and thus for molecules with $\tilde{v}_x^{\#}\neq 0$
we conclude that
\begin{equation}
\tilde{h}^{t,0}(\tilde{t},\tilde{x},y,\tilde{e}_x,\tilde{e}_z)=
\tilde{h}^{t,0}(\tilde{t},\tilde{x},\tilde{e}_x,\tilde{e}_z). \label{po}
\end{equation}
Then from this property and from (\ref{trappedperadim}) we deduce that $\tilde{h}^{t,0}_b(\tilde{t},\tilde{x},\tilde{e}_x,\tilde{e}_z)$
is an even function of $\ti{e}_x$.\\

At the next order we get
\begin{equation}
\partial_{\tilde{t}} \tilde{h}^{t,0}+ \tilde{v}_x^{\#}\partial_{\tilde{x}}\tilde{h}^{t,0}
+\tilde{v}_x^{\#}\partial_y\tilde{h}^{t,1} =\frac{1}{\ti{\tau}}
\left( \ti{\Theta}[\ti{h}^0] \ti{l}\ti{M}-\ti{h}^{t,0}\right).
\label{o1}
\end{equation}
We consider now  separately bound and unbound molecules. \\

Let us first consider unbound molecules. 
Taking into account (\ref{po}) and multiplying (\ref{o1}) by 
$\tilde{v}_x^{\#}(\ti{x}/\varepsilon,\tilde{e}_x)^{-1}$
, we get, for unbound molecules
\begin{equation*}
\frac{1}{\tilde{v}_x^{\#}(\ti{x}/\varepsilon,\tilde{e}_x)}\partial_{\tilde{t}} \tilde{h}^{t,0}_u+  \partial_{\tilde{x}} 
\tilde{h}^{t,0}_u+ \partial_y\tilde{h}^{t,1}_u  = 
\frac{1}{\tilde{v}_x^{\#}(\ti{x}/\varepsilon,\tilde{e}_x)\ \ti{\tau}}
\left( \ti{{\Theta}}[\ti{h}^{t,0}]\ti{l}\ti{M}-\ti{h}^{t,0}_u\right).
\end{equation*}
Then averaging with respect to the fast variable over one period and taking into account that $\tilde{h}^{t,1}_u$ is periodic in $y$, we get
\begin{equation*}
\frac{1}{\tilde{w}_x(\ti{e}_x)}\partial_{\tilde{t}} \tilde{h}^{t,0}_u+ \partial_{\tilde{x}} 
\tilde{h}^{t,0}_u = 
\frac{1}{\tilde{w}_x(\tilde{e}_x)\ \ti{\tau}}
\left( \ti{\overline{\Theta}}[\ti{h}^{t,0}]\ti{l}\ti{M}-\ti{h}^{t,0}_u\right),
\end{equation*}
where
\begin{eqnarray*}
{\ti{w}_x(\ti{e}_x)} &=& {sgn(\ti{e}_x)} \left(\frac{1}{2}\int_{-1}^{+1} \tilde{v}_x^{\#}(y,\ti{e}_x)^{-1}dy\right)^{-1}, \\
\ti{\overline{\Theta}}[\ti{h}^{t,0}] &=&  \frac{1}{2\tilde{\overline{\sigma}}_x(\ti{e}_x)}\int_{-1}^{+1}\ti{\sigma}_x^{\#}(y,\ti{e}_x) 
\ti{\Theta}[\ti{h}^{t,0}](\ti{t},\ti{x},y,\ti{e}_z)\ dy. \label{tildovertheta}
\end{eqnarray*}
Finally, multiplying by $\tilde{w}_x(\ti{e}_x)$ we obtain
\begin{equation}
\partial_{\tilde{t}} \tilde{h}^{t,0}_u+\tilde{w}_x(\ti{e}_x) \partial_{\tilde{x}} \tilde{h}^{t,0}_u = 
\frac{1}{ \ti{\tau}}
\left( \ti{\overline{\Theta}}[\ti{h}^{t,0}]\ti{l}\ti{M}-\ti{h}^{t,0}_u\right), \label{closureu}
\end{equation}

Let us consider now bound molecules. Since $\tilde{h}^{t,0}_b(\tilde{t},\tilde{x},\tilde{e}_x,\tilde{e}_z)$
is an even function of $\ti{e}_x$ it is equal to its even part and thus the left-hand-side of ($\ref{o1}$) writes also for bound molecules
\begin{equation*}
\partial_{\tilde{t}} \tilde{h}^{t,0}_b+ {\frac{1}{2}}(\tilde{v}_x^{\#}(e_x)+\tilde{v}_x^{\#}(-e_x)) \partial_{\tilde{x}}\tilde{h}^{t,0}_b
+{\frac{1}{2}}\left( \tilde{v}_x^{\#}(e_x)\partial_y\tilde{h}^{t,1}(e_x)+\tilde{v}_x^{\#}(-e_x)\partial_y\tilde{h}^{t,1}(-e_x)\right). 
\label{o1-b}
\end{equation*}
But $\tilde{v}_x^{\#}(y,e_x)$ is an odd function of $e_x$, so that 
$(\tilde{v}_x^{\#}(e_x)+\tilde{v}_x^{\#}(-e_x)) \partial_{\tilde{x}}\tilde{h}^{t,0}_b$ cancels. Then multiplying by 
$\tilde{\sigma}_x^{\#}(\ti{x}/\varepsilon,\ti{e}_x)$ and averaging in the fast variable over one period (between 
$y_{-}(\ti{e}_x)$ and $y_{+}(\ti{e}_x)$), we get finally for the bound molecules
\begin{equation}
\partial_{\tilde{t}} \tilde{h}^{t,0}_b =
\frac{1}{\ti{\tau}} \left(  \ti{\overline{\Theta}}[\ti{h}^{t,0}]\ti{l}\ti{M}-\ti{h}^{t,0}_b\right),\label{closureb} 
\end{equation}
where
\begin{equation*}
\ti{\overline{\Theta}}[\ti{h}^{t,0}] = \frac{1}{(y_{+}(\ti{e}_x)-y_{-}(\ti{e}_x))\tilde{\overline{\sigma}}_x(\ti{e}_x)}\int_{y_{-}(\ti{e}_x)}^{y_{+}(\ti{e}_x)}
\ti{\sigma}_x^{\#}(y,\ti{e}_x) \ti{\Theta}[\ti{h}^{t,0}](\ti{t},\ti{x},y,\ti{e}_z)\ dy.
\end{equation*}

Rewriting equation (\ref{closureu}-\ref{closureb}) in dimension form and omitting the superscript $0$, we get
\begin{eqnarray}
\partial_{t} h^t_u(t,x,e_x,e_z)+ {w}_x(e_x)\partial_x h^t_u(t,x,e_x,e_z) &=& \frac{1}{{\tau}_{ms}} 
\left( \overline{\Theta}[{h}^t]l {M}-{h^t_u} \right),\label{closureu2}\\ 
\partial_{t} h^t_b(t,x,e_x,e_z) &=& \frac{1}{{\tau}_{ms}} 
\left( \overline{\Theta}[{h}^t]l {M}-{h^t_b} \right)\red{.}\label{closureb2}
\end{eqnarray}

\begin{proposition}
Under the assumptions 
(\ref{simplepot1}-\ref{simplepot2}-\ref{nocoll}-\ref{Ldeltaxstar}-\ref{Wminf0}-\ref{tauconst}-\ref{Uperiodic1}), 
the solutions of (\ref{eqht}) formally converge as
$\varepsilon =\delta/x^* \rightarrow 0$ to a distibution function $h^t(t,x,e_x,e_z)$, 
satisfying the following  "mesoscopic" kinetic model
\begin{eqnarray*}
\partial_{t}h^t + {w_x}(e_x)\partial_x h^t 
&=& \frac{1}{{\tau}_{ms}}
\left( \overline{\Theta}[{h}^t]l(e_z) {M}-{h}^t \right),\;\; \label{mesomod}
\end{eqnarray*}
where 
\begin{eqnarray*}
{{w}_x({e}_x)} &=& \chi_u(e_x)\ {sgn(e_x)}\left(\frac{1}{2}\int_{-1}^{+1} (e_x^2-2\hat{U}(y)/m)^{{-1/2}}dy\right)^{-1},
\label{averagev1}  \\             
\overline{\Theta}[{h}^t](e_x,{e}_z) &=& \frac{1}{(y_{+}(e_x)-y_{-}(e_x))\overline{\sigma}_x({e}_x)}\int_{y_{-}(e_x)}^{y_{+}(e_x)} {\sigma}_x^{\#}(y,{e}_x) 
\Theta[{h}^t](y,{e}_z)\ dy, \label{overlineTheta}\\
\Theta[{h}^t](y,e_z) &=& \frac{1}{{\tau}_z(e_z)}
\int_{z_{-}(e_z)}^{z_{+}(e_z)} (n[h/l](y,z)\sigma_z(z,e_z) /\gamma_1(y,z))dz, \\
\gamma_1(y,z) &=&  \frac{2 \pi kT}{m}  \exp \left(-\frac{\hat{U}(y)+W(z)}{kT}   \right),\\
n[g](y,z) &=& \int_{{\mathcal E}_z(z)} \int_{{\mathcal E}_x^{\#}(y)} g({e}_x,{e}_z) |{e}_x| {\sigma}_x^{\#}(y,{e}_x)
|{e}_z|{\sigma}_z(z,{e}_z)\  d{e}_x\ d{e}_z,\\
 \overline{\sigma}_x({e}_x) &=& \frac{1}{2} \int_{y_{-}(e_x)}^{y_{+}(e_x)} \left( \sqrt{e_x^2-\frac{2}{m}\hat{U}(y)}\right)^{-1}dy,\\
{\mathcal E}_x^{\#}(y) &=& \{e_x,\; e_x^2>2\hat{U}(y)/m \}. 
\end{eqnarray*}

\end{proposition}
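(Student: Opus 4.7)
The plan is to carry out a formal two-scale asymptotic expansion of the non-dimensional kinetic equation (\ref{adimeqhty}), exploiting the fast/slow structure induced by the small parameter $\varepsilon = \delta/x^*$. The key is to treat $y = \tilde{x}/\varepsilon$ as an independent fast variable on top of the slow variable $\tilde{x}$, enforcing $2$-periodicity in $y$ at every order of the expansion (\ref{exph}) and, for bound molecules, the parity constraint (\ref{trappedperadim}) at the turning points $y = y_\pm(\tilde{e}_x)$.

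First I would identify the leading order behavior. At order $\varepsilon^{-1}$ equation (\ref{adimeqhty}) forces $\tilde{v}_x^\#\,\partial_y\tilde{h}^{t,0} = 0$. On the unbound set where $\tilde{v}_x^\#$ never vanishes in $y$, this gives (\ref{po}); on the bound set, $\tilde{v}_x^\#$ vanishes only at the turning points, and we still deduce that $\tilde{h}^{t,0}_b$ is independent of $y$ on the admissible interval $[y_-(\tilde{e}_x),y_+(\tilde{e}_x)]$. Combining this independence with (\ref{trappedperadim}) then yields that $\tilde{h}^{t,0}_b$ is an even function of $\tilde{e}_x$, which is essential for the bound closure below.

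Next I would pass to the $O(1)$ equation (\ref{o1}) and split the analysis into the two regimes. For unbound molecules I divide by $\tilde{v}_x^\#$, use the $y$-periodicity of $\tilde{h}^{t,1}$ to kill the contribution of $\partial_y\tilde{h}^{t,1}$ under averaging over $y\in[-1,+1]$, and recognize the harmonic mean of $\tilde{v}_x^\#$ as $\tilde{w}_x(\tilde{e}_x)$; the relaxation term yields $\tilde{\overline{\Theta}}$ by weighting $\tilde{\Theta}$ with $\tilde{\sigma}_x^\#$. For bound molecules the trick is different: using the even parity of $\tilde{h}^{t,0}_b$ in $\tilde{e}_x$, I symmetrize (\ref{o1}) in $\tilde{e}_x$; since $\tilde{v}_x^\#$ is odd in $\tilde{e}_x$, the slow transport term cancels and only the time derivative survives on the left. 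I then multiply by $\tilde{\sigma}_x^\#$ and integrate over $y\in[y_-(\tilde{e}_x),y_+(\tilde{e}_x)]$, using (\ref{trappedperadim}) and the vanishing of $\tilde{v}_x^\#$ at $y_\pm(\tilde{e}_x)$ to eliminate the $\partial_y\tilde{h}^{t,1}$ contribution. Re-dimensionalizing and dropping the superscript $0$ produces exactly (\ref{closureu2})--(\ref{closureb2}) with the coefficients stated in Proposition~2.

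The main difficulty will be the bound molecule case. The fast variable dynamics has singular coefficients (both $1/\tilde{v}_x^\#$ and $\tilde{\sigma}_x^\#$ diverge at $y_\pm$) and the averaging is carried out on the $\tilde{e}_x$-dependent window $[y_-(\tilde{e}_x),y_+(\tilde{e}_x)]$ rather than on the full period. Ensuring that the boundary terms arising when integrating the $\tilde{v}_x^\#\partial_y\tilde{h}^{t,1}$ flux actually vanish requires the parity constraint (\ref{trappedperadim}) to be compatible with the ansatz at every order, and that the integrable singularities of $\tilde{\sigma}_x^\#$ at $y_\pm$ are balanced by the vanishing of $\tilde{v}_x^\#$ there. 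This delicate compensation at the turning points is the technical heart of the argument and the subtle point behind the clean bound-molecule relaxation identity (\ref{closureb}).
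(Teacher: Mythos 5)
Your proposal is correct and follows essentially the same route as the paper: the two-scale ansatz (\ref{exph}) with periodicity and the parity constraint (\ref{trappedperadim}), the leading-order conclusion that $\tilde{h}^{t,0}$ is independent of $y$ and that $\tilde{h}^{t,0}_b$ is even in $\tilde{e}_x$, division by $\tilde{v}_x^{\#}$ plus $y$-averaging for unbound molecules, and symmetrization in $\tilde{e}_x$ followed by weighting with $\tilde{\sigma}_x^{\#}$ over $[y_-,y_+]$ for bound molecules. You also correctly single out the cancellation of the $\partial_y\tilde{h}^{t,1}$ boundary terms at the turning points via (\ref{trappedperadim}) as the delicate step, which is exactly where the paper's argument relies on that constraint.
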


\begin{remark}
\begin{enumerate}
\item The distribution functions $h^t_c$, for $c=b,u$ describe the number density of gas molecules
with velocity $(e_x,e_z)$ per x-unit, obtained by averaging the distribution functions $g^t$
over the periods of the tangential potential $U$. Then 
\[N(t,x)=\int_{y_{-}(e_x)}^{y_{-}(e_x)}\int_0^L n[h/l](t,x,y,z)\ dz\ dy = 2 
\int_{{e}_z} \int_{{e}_x} h |{e}_x| {\overline{\sigma}}_{x}({e}_x) d{e}_x\ d{e}_z \]
 is the number
density of molecules per unit-length, averaged over the periods of the tangential potential $U$. 
\item We can check that 
$\int_{{e}_z} \int_{{e}_x} (\overline{\Theta}[h]lM-h) |{e}_x| {\overline{\sigma}}_{x}({e}_x) d{e}_x\ d{e}_z =0$, 
which ensures the local conservation of mass.
\item In this model the unbound surface molecules move in the x-direction at an average velocity depending on their total  
tangential energy, or 
equivalently, on $e_x$. On the other hand the bound surface molecules are trapped in a well of the tangential potential $U$. 
Their average velocity is null and they relax inside a potential well toward the equilibrium (see equation (\ref{closureb2})). 
Let us notice that, contrary to (\cite{Kry2}), we do not need to assume that the bound molecules are at equilibrium.
\item We have assumed above that $L<< \delta<< x^*$ (\ref{Ldeltaxstar}). It could be interesting to drop this hypothesis 
and to consider the case where $L$ and $\delta$ are comparable. But, in such a case,  we cannot uncouple the asymptotic analysis 
of propositions 1 and 2 and we have to treat both of them together. 
\end{enumerate}
\end{remark}


\section{Diffusion models for surface molecules}

In section 2 we derived a multiphase model describing the coupling of the bulk flow with the
motion of molecules on the surface. This motion is given by a set of two coupled kinetic equations 
on variable $x$, including a Vlasov term due to the interaction potential parallel to the surface. 
  In this section we will consider integration
time much greater than the molecule-substrate relaxation time and thus we derive diffusion 
models. We consider the case of a smooth interaction potential parallel to the surface.
We proceed in several steps and we begin with simplifying assumptions. \\

In  a first
subsection we consider the same simple configuration as in section 3 where we assumed that the free molecules in the
surface layer can be neglected so that all surface molecules are trapped. As before it is reasonable to assume that 
\begin{equation} W_m=+\infty,  \label{Wminf} \end{equation}
so that when a molecule enters the surface layer, it cannot escape and moreover we neglect the
flux of incoming molecules. Thus there are no free 
molecules inside the surface layer and it is possible to describe the trapped surface molecules by the 
following closed model describing the evolution of the distribution function $g=g(v_x,e_z)$
\begin{equation}
\partial_t g +v_x\partial_x g - \frac{U'(x)}{m}\partial_{v_x}g= \frac{1}{\tau_{ms}}\left( \Theta [g]lM -g\right), \label{eqcin}
\end{equation}
 where
\begin{eqnarray*}
M(v_x,e_z)&=&\exp \left(-m(v_x^2+e_z^2)/2kT \right),
\end{eqnarray*}
and we denote
\begin{eqnarray*}
M_x(v_x)= \exp \left(-\frac{mv_x^2}{2kT} \right),\ M_z(e_z)=\exp \left(-\frac{me_z^2}{2kT} \right),\\
\gamma_x= \sqrt{\frac{2k\pi T}{m}},\
\gamma_z(z)= \int_{{\mathcal E}_z(z)}M_z(e_z)|e_z|\sigma_z(z,e_z)de_z= 
\sqrt{\frac{2k\pi T}{m}}\exp \left(-\frac{W(z)}{kT} \right).
\end{eqnarray*}
The diffusion limit is derived, first in the isothermal case and then extended to the non-isothermal
case. \\
Afterwards, in the second subsection we drop the assumption $W_m=+\infty$ and we extend the analysis of the diffusion
limit to the configuration of a flow in a narrow channel where the free molecules are not neglected but where there 
is no bulk flow. \\

Before deriving the diffusion models we need a technical lemma related to the collision operator ${\Theta}[g]lM-g$.
 Let us denote in the following,
\begin{eqnarray*}  
((. ))   &=& \int_{{e}_z} \int_{{v}_x}  d{v}_x\ d{e}_z.
\end{eqnarray*}
Then we have

\begin{lemma}
Let us consider the following equation for $g$:
\begin{equation}
\Theta[g](x,e_z)l(e_z)M(v_x,e_z) -g(x,v_x,e_z) + \psi(x,v_x)l(e_z)M_z(e_z) = 0,  \label{tetaeq}
\end{equation}
where $\psi$ is a given function.\\
 A necessary and sufficient solvability condition for this equation  is
\begin{equation}
\int_{v_x}\psi (x,v_x)\ dv_x = 0. \label{solvcond}
\end{equation}
Moreover if (\ref{solvcond}) is satisfied every solution of (\ref{tetaeq}) writes
\begin{equation*}
g=\beta(x)l(e_z) M(v_x,e_z)+ \psi(x,v_x)l(e_z)M_z(e_z), \label{soltetaeq}
\end{equation*}
where $\beta$ does not depend on $v_x$ and $e_z$ 
\end{lemma}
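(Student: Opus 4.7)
The strategy has three steps: integrate (\ref{tetaeq}) to obtain (\ref{solvcond}); check that the announced ansatz solves the equation; and show that every solution has this form via a self-consistency equation for $\Theta[g]$.

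\emph{Necessity and direct verification.} Integrating (\ref{tetaeq}) in $((\cdot))$, the collision contribution is zero: using $l/\tau_z=|e_z|$, the Fubini interchange (\ref{Fubi-z-e}), and the identity $\gamma_z/\gamma_0=1/\gamma_x$, both $((g))$ and $((\Theta[g]lM))$ reduce to $\int_0^L n[g/l](x,z)\,dz$---this is the mass-conservation property of the molecule-phonon operator noted in Remark~2. What remains is $((\psi lM_z))=\bigl(\int_{v_x}\psi\,dv_x\bigr)\cdot\int lM_z\,de_z$, and since $\int lM_z\,de_z>0$, condition (\ref{solvcond}) is forced. Conversely, a direct computation gives $\Theta[lM]=1$ (since $n[M]=\gamma_0$) and $\Theta[\psi lM_z]=\gamma_x^{-1}\int_{v_x}\psi\,dv_x$, which vanishes under (\ref{solvcond}); hence $g=\beta(x)lM+\psi lM_z$ satisfies $\Theta[g]=\beta$ and solves the equation.

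\emph{All solutions have the claimed form.} Rewriting (\ref{tetaeq}) as $g=\Theta[g]\,lM+\psi\,lM_z$ and setting $\alpha(x,e_z):=\Theta[g](x,e_z)$ automatically gives $g=\alpha\,lM+\psi\,lM_z$; it remains to show $\alpha$ is independent of $e_z$. Substituting this expression back into the definition of $\Theta[g]$ and using $\Theta[\psi lM_z]=0$ yields the self-consistency equation
\begin{equation*}
\alpha(x,e_z)=\frac{\gamma_x}{\tau_z(e_z)}\int_{z_{-}(e_z)}^{z_{+}(e_z)}\frac{\sigma_z(z,e_z)}{\gamma_0(z)}F(x,z)\,dz,\quad F(x,z):=\int_{\mathcal{E}_z(z)}\alpha(x,e_z')M_z(e_z')|e_z'|\sigma_z(z,e_z')\,de_z'.
\end{equation*}
Multiplying by $\alpha\,M_z\,l$, integrating in $e_z$, swapping order via (\ref{Fubi-z-e}), and using $l/\tau_z=|e_z|$ and $\gamma_z/\gamma_0=1/\gamma_x$, one obtains the identity $\int\alpha^2 M_z l\,de_z=\gamma_x\int_0^L F(x,z)^2/\gamma_0(z)\,dz$. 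Independently, applying Cauchy-Schwarz to the $e_z'$-integral defining $F(x,z)$ (with the weight $M_z|e_z'|\sigma_z$, whose total mass on $\mathcal{E}_z(z)$ equals $\gamma_z(z)$) and then (\ref{Fubi-z-e}) to the $z$-integral gives the reverse inequality $\gamma_x\int_0^L F^2/\gamma_0\,dz\leq\int\alpha^2 M_z l\,de_z$. Combining the two forces equality in Cauchy-Schwarz for a.e.\ $z$, so $\alpha(x,\cdot)$ is constant on $\mathcal{E}_z(z)=\{|e_z|>\sqrt{2W(z)/m}\}$. Since $W(z_m)=0$ and $W$ is continuous, the sets $\mathcal{E}_z(z)$ for $z$ near $z_m$ have nonempty pairwise intersections and cover $\RR\setminus\{0\}$, hence the constants must coincide and $\alpha(x,e_z)=\beta(x)$.

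\emph{Main obstacle.} The delicate step is the last one: extracting $e_z$-constancy from the fixed-point equation $\alpha=\Theta[\alpha lM]$. Cauchy-Schwarz saturation works precisely because the two independent evaluations of $\gamma_x\int_0^L F^2/\gamma_0\,dz$---one via Fubini, the other via Cauchy-Schwarz---match exactly; the existence of the point $z_m$ where $W$ vanishes (and hence where \emph{every} $e_z$ is accessible) is then what converts pointwise-in-$z$ constancy into global constancy of $\alpha$ in $e_z$.
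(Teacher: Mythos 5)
Your proof is correct, but the key step --- showing that every solution of the homogeneous equation is proportional to $lM$ with a factor independent of $e_z$ --- is done by a genuinely different argument than in the paper. The paper writes $g=\beta(x,v_x,e_z)lM$, derives the integral equation (\ref{intbeta}) with a Markov kernel $k$ of unit mass, observes that $k$ does not depend on $v_x$ so neither does $\beta$, and then runs an $L^\infty$ maximum principle: if $\beta$ were not constant in $e_z$, evaluating the kernel against $\beta$ would produce a value strictly below $\operatorname{ess\,sup}\beta$, a contradiction. You instead observe that $\alpha:=\Theta[g]$ is by construction a function of $(x,e_z)$ only --- which disposes of the $v_x$-dependence more directly than the paper's kernel inspection --- and then close the argument in $L^2$: the weighted energy identity $\int\alpha^2M_zl\,de_z=\gamma_x\int_0^LF^2/\gamma_0\,dz$ obtained from the fixed-point equation, combined with the reverse Cauchy--Schwarz bound, forces equality in Cauchy--Schwarz and hence constancy of $\alpha$ on each $\mathcal E_z(z)$; the overlap of these sets near $z_m$ (where $W$ vanishes) then gives global constancy. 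Both arguments ultimately rest on the same two structural facts (the kernel/weight is a probability measure, and all intervals $[z_-,z_+]$ contain $z_m$), but yours trades the paper's $L^\infty_{e_z}$ framework for square-integrability of $\alpha$ against $M_zl$ --- you should say explicitly that $\int\alpha^2M_zl\,de_z<\infty$ is needed to cancel the two sides of the inequality chain, just as the paper restricts to $L^\infty_{e_z}$. Your verification of necessity and of the particular solution $\psi lM_z$ (via $\Theta[\psi lM_z]=\gamma_x^{-1}\int\psi\,dv_x=0$) matches the paper's computation. One small bonus of your route: it handles the nonhomogeneous equation in one pass rather than splitting off a particular solution.
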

\begin{proof}  
(i) As we have noticed in remarks 1, $((\Theta[g]lM-g))=0 $
which gives the solvabiliy condition (\ref{solvcond}).\\
(ii) It can be easily checked that if $\beta= \beta (x)$ does not depend on 
$v_x$ and $e_z$, then $\Theta[\beta lM]=\beta$. Thus $g(x,v_x,e_z)= \beta(x)l(e_z)M(v_x,e_z)$ are solutions of 
equation (\ref{tetaeq}) with $\psi=0$. We can prove that they are the only solutions. Indeed if we look for a solution 
in the form $\beta(x,v_x,e_z)l(e_z)M(v_x,e_z)$, then $\beta(x,v_x,e_z)$ is solution of an integral equation
\begin{equation} 
\beta({x},{v}_x,{e}_z)=\int_{e'_z} \int_{v'_x} k({v}_x,{e}_z,{v'}_x,e'_z)\ 
\beta ({x},{v'}_x,e'_z)\  
d{v'}_{x}de'_{z},\label{intbeta}
\end{equation}
 where the kernel $k({v}_x,{e}_z,{v'}_x,e'_z)=k({e}_z,{v'}_x,{e'}_z)$ is defined by
\[k({e}_z,{v'}_x,{e'}_z)
=\int_{z_{-}(e_z,e'_z)}^{z_{+}(e_z,e'_z)} 
\frac{ {\sigma}_z({z},{e}_z) |{e'}_z|{\sigma}_z({z},e'_z)M_x({v'}_x)M_z(e'_z)}{\gamma_x\ \gamma_z({z})\ \tau_z(e_z)}\ d{z}, \]
where
\begin{eqnarray*}
 z_{-}(e_z,e'_z)=\max (z_{-}(e_z),z_{-}(e'_z)),\ z_{+}(e_z,e'_z)=\min(z_{+}(e_z),z_{+}(e'_z)), 
\end{eqnarray*} 
and this kernel $k$  satisfies 
\[\forall\  {e}_z,\;\; \int_{e'_z} \int_{v'_x}   k({e}_z,{v'}_x,{e'}_z)\ d{v'}_x\ d{e'}_z =1.\] 
Moreover, since the kernel $k$ is independent of $v_x$, so is $\beta$ and the integral equation (\ref{intbeta}) writes 
\begin{equation} 
\beta({x},{e}_z)=\int_{{e'}_z}  \hat{k}({e}_z,{e'}_z)\ 
\beta ({x},{e'}_z)\  d{e'}_{z},\label{intbeta2}
\end{equation}
where the kernel $\hat{k}({e}_z,{e'}_z)=\int_{{v'}_x}k({e}_z,{v'}_x,{e'}_z)d{v'}_x$ satisfies
\[\forall\  {e}_z,\;\; \hat{k}({e}_z,.) \in L^1_{{e'}_z}\;\; \mbox{ and}\;  
\int_{{e'}_z}   \hat{k}({e}_z,{e'}_z)\ d{e'}_z =1.\] 
It is easy to check that functions $\beta =\beta (x)$ (constant with respect to $e_z$) are solutions of this equation. Moreover
we prove now that they are the only solutions in $L^{\infty}_{e_z}$. First we prove by contradiction that the only solutions
of $\alpha(e_z)=\int_{e'_z} \hat{k}({e}_z,e'_z)\alpha(e'_z)d{e'}_z$ are constant functions.
If we denote $m= \mbox{ess sup}_{e'_z}\alpha(e'_z)$, since $\alpha$ is not a constant 
function of $e'_z$
\[   \exists \eta\ >0,\;\ \alpha(e'_z)\leq m-\eta, \; \mbox{ a.e. in $\; e'_z$ on a set}\;  {\mathcal F}_{x,\eta}\; 
\mbox{with positive measure in}\ e'_z,\]
so that
\begin{eqnarray*}
\alpha (e_z ) &=& \int_{e'_z} \hat{k}(e_z,e'_z)\alpha (e'_z)de'_z, \\
                      &\leq& m-\eta \int_{{\mathcal F}_{x,\eta}}\hat{k}(e_z,\epsilon_z)d\epsilon_z \leq m- \eta k_0,
\end{eqnarray*}
where
\[ 0<k_0= \sqrt{\frac{m}{2k\pi T}}\int_{{\mathcal F}_{x,\eta}}M_z(e'_z)de'_z\leq \int_{} \frac{\sigma_z(z,e_z)}{\tau_z(e_z)\gamma_z(z)}
\int_{{\mathcal F}_{x,\eta}}|e'_z|\sigma_z(z,e'_z)M_z(e'_z)de'_z\ dz.\]
This last inequality is in contradiction 
with the definition of $m$ so that we conclude that necessarily $\alpha$ does not depends on $e_z$. We easily deduce of 
this property that the only solutions of (\ref{intbeta2}) are functions $\beta (x,.)$ which, for almost $x$, are constant
functions in $e_z$. Indeed, for $\xi$ given in $C^0_{K}$, let us denote $\alpha_{\xi}(e_z)=\int_x \xi (x)\beta(x,e_z)dx$. Then  
$\alpha_{\xi}$ is solution of the
integral equation $\alpha_{\xi}(e_z)=\int_{e'_z} \hat{k}({e}_z,e'_z)\alpha_{\xi}(e'_z)d{e'}_z$, so that $\alpha_{\xi}$ is a constant. But
$\xi  \rightarrow \alpha_{\xi}$ is a linear form that can be written $\int_x\zeta (x)\xi(x)dx$, so that almost everywhere in $x$
we have $\beta(x,e_z)=\zeta (x)$. Thus every solution of the homogeneous equation (\ref{tetaeq}) writes $\beta (x) l(e_z)M(v_x,e_z).$\\

(iii) Let us now consider the nonhomogeneous equation. First, we remark that (\ref{solvcond}) implies that
\begin{equation*}
n[\psi M_z]=\left(\int_{v_x}\psi(x,v_x)dv_x\right)\left(\int_{e_z}|e_z|\sigma_z(z,e_z)M_z(e_z)de_z\right)=0,
\end{equation*} 
so that $\psi l M$ is a particular solution of (\ref{tetaeq}), and then, from (ii), every solution writes
\[g(x,v_x,e_z)=\beta(x)l(e_z)M(v_x,e_z)+\psi(x,v_x)l(e_z)M_z(e_z).\]
\end{proof}

\subsection{Diffusion model for the trapped surface-molecules}
For the sake of simplicity, we assume in a first step that the temperature $T$ is a
fixed given constant.   

\subsubsection{The isothermal case}
Under the above assumptions the  kinetic model obtained in the previous sections writes
\begin{equation}
\partial_t g +v_x\partial_x g - \frac{U'(x)}{m}\partial_{v_x}g= \frac{1}{\tau_{ms}}\left( \Theta [g]lM -g\right). \label{eqcinbis}
\end{equation}
{To obtain a dimensionless form of this equation we introduce the following reference quantities. ${z}^*=L$ is a 
reference length in the z-direction, ${{v}^*}=\sqrt{2kT/m} $ is a reference velocity, ${t}_d^*$ is a reference 
{\em diffusion time},  ${\tau}^*_{ms}$ is a reference {\em molecule-phonon relaxation time}, $\tau_z^*=z^*/v^*$ is 
a reference time for crossing the surface layer, ${n}^*$ is a reference number density and ${g}^*=({n}^*z^*)/{v^*}^2$, 
$U^*=m{v^*}²$ 
and $\tilde{M}(\ti{v}_x,\ti{e}_z) = 
\exp \left(-(\ti{v}_x^2+\ti{e}_z^2)\right)$. Moreover we introduce another reference time $t_c^*$ such that}
\begin{equation*}
     {\tau}^*_{z}\;  <<\; {\tau}^*_{ms}\;  <<\; {t}_c^*\; <<\; {t}_d^*,  \label{hypdiff}
\end{equation*}
and the small parameter $\varepsilon$, and we set
\begin{equation*}
     \varepsilon =\frac{{\tau}^*_{ms}}{{t}_c^*} =\frac{{t}_c^*}{{t}_d^*}. \label{epsi}
\end{equation*}
{Finally  we introduce ${{x}^{**}}$, a reference length in the x-direction defined by}
\begin{equation*}
x^{**}=v^* t_c^*.
\end{equation*}
We rescale the problem according to 
\begin{equation}
 \tilde{x}=\frac{x}{{{x}^{**}}}, 
\tilde{z}=\frac{z}{{z}^*},
\tilde{l}=\frac{l}{{z}^*},
\tilde{t}=\frac{t}{{t}_d^*},
\tilde{\tau}_{ms}=\frac{{\tau}_{ms}}{{\tau}^*_{ms}},
 \tilde{{v}}_x=\frac{{v}_x}{{{v}^*}}, 
\tilde{e}_{x/z}=\frac{e_{x/z}}{v^*},
\label{scaling-diff}
\end{equation}
and
\begin{equation}
\tilde{g}=\frac{g{v^*}^2}{n^*z^*},\;\;\;\; \tilde{U}=\frac{U}{U^*}, \;\;\;\;
\tilde{{\Theta}}= \frac{{\Theta}{v^*}^2}{n^*}.\label{scaling-diff-2}
\end{equation}
With the previous notations and the above assumption, the dimensionless form of equation (\ref{eqcinbis}) writes
\begin{eqnarray}
\varepsilon \partial_{\tilde{t}}\ti{g}+
\tilde{{v}}_x
\partial_{\tilde{x}}{\ti{g}}- \ti{U}'(\ti{x})\partial_{\ti{v}_x}\ti{g}&=&
\frac{1}{\varepsilon \tilde{\tau}_{ms}} 
\left(\ti{{\Theta}}[\ti{g}] \ti{l}\ti{M}-
{\tilde{g}} \right)  \label{adimavkin}, 
\end{eqnarray}
We consider the asymptotic analysis of this problem when $\varepsilon$ tends to
$0$. We look for
\begin{eqnarray} 
\tilde{g} &= &\tilde{g}^{(0)}+\varepsilon \tilde{g}^{(1)}+ \varepsilon^2 \tilde{g}^{(2)} + .... \label{data}.
\end{eqnarray}
       
Inserting (\ref{data}) in (\ref{adimavkin}) and balancing order 
by order in $\varepsilon$ we get\\
\noindent {\em At leading order}

\begin{equation}  
\tilde{g}^{(0)} = \ti{{\Theta}}[\tilde{g}^{(0)}] \ti{l} \tilde{M}. \label{orderzdiff}  
\end{equation}
From the dimensionless form of Lemma 1 the solutions of (\ref{orderzdiff}) write
\begin{equation}
\ti{g}^{(0)} = \alpha (\ti{t},\ti{x})\ti{l}\ti{M}= \frac{\ti{N}^0(\ti{t},\ti{x})}{\ti{\gamma}}\ti{l}\ti{M},
\label{gzero}
\end{equation}
where $\ti{N}^0(\ti{t},\ti{x})$ is defined by
\begin{equation*}
   \ti{N}^0(\ti{t},\ti{x})= \alpha (\ti{t},\ti{x})\ \ti{\gamma},
\end{equation*}
and  where the constant $\ti{\gamma}$ (depending only on $W$ and $T$) is defined by
\begin{equation*}
\ti{\gamma}=  \int_{{\ti{e}_z}} \int_{{\ti{v}_x}} \ti{M}({\ti{v}_x},{\ti{e}_z})
\ti{l}({\ti{e}_z})\ d{\ti{v}_x}d{\ti{e}_z}. 
\end{equation*}

 Then we get \\
\noindent {\em at order +1}
\begin{eqnarray*}
 \tilde{{v}_x}\ \partial_{\tilde{x}}\tilde{g}^{(0)}-\ti{U}'(\ti{x})\partial{\ti{v}_x}\ti{g}^{(0)} &=& 
\frac{1}{\tilde{\tau}_{ms}}\left(\ti{{\Theta}}[\ti{g}^{(1)}]\ti{l}\ti{M} -\tilde{g}^{(1)} \right).  
\label{order1}
\end{eqnarray*}
We notice that the left-hand-side of this relation writes 
\[\left( \tilde{{v}_x}\ \partial_{\tilde{x}}\ti{N}\frac{M_x(v_x)}{\ti{\gamma}_x}+2\ti{U}'(\ti{x})\ti{v}_x\ti{N}\ti{M}_x(\ti{v}_x)\right)
\ti{l}(\ti{e}_z)\ti{M}_z(\ti{e}_z), \]
so that we can use the dimensionless form of Lemma 1 and thus solutions of this relation write
\begin{eqnarray}
\tilde{g}^{(1)} &=& \ti{\alpha}^1(\ti{t},\ti{x})\ti{l}\ti{M} 
- \frac{\ti{\tau}_{ms}}{\ti{\gamma}}  \left(\partial_{\ti{x}}\ti{N}^0(\ti{t},\ti{x})\right) \ti{l}\ti{{v}_x}\ti{M}
+ \frac{\ti{\tau_{ms}}}{\ti{\gamma}}\ti{U}'(\ti{x})(\partial_{\ti{x}}\ti{N}^0(\ti{t},\ti{x}))\ti{l}\ti{M}
\label{order1diff}.\\
\tilde{g}^{(1)} &=& \ti{\alpha}^1(\ti{t},\ti{x})\ti{l}\ti{M} 
- \frac{\ti{\tau}_{ms}}{\ti{\gamma}}  \left(\partial_{\ti{x}}\ti{N}^0(\ti{t},\ti{x})\right)(\ti{{v}_x}-\ti{U}'(\ti{x}))\ti{l}\ti{M}
\label{order1diff}.
\end{eqnarray}

\noindent {\em at order +2}

\begin{eqnarray*}
\partial_{\ti{t}} \ti{g}^{(0)}  +\ \tilde{{v}}_x\ \partial_{\tilde{x}}\tilde{g}^{(1)} 
-\ti{U}'(\ti{x})\partial{\ti{v}_x}\ti{g}^{(1)}&=& 
\frac{1}{\tilde{\tau}_{ms}}\left(\ti{{\Theta}}[\ti{g}^{(2)}]\ti{l}\ti{M}- \tilde{g}^{(2)} \right).
\label{order2diff}
\end{eqnarray*}
As above and taking into account (\ref{order1diff}), the left hand side of this relation can be written in the form 
$\ti{\psi}(\ti{x},\ti{v}_x)\ti{l}(\ti{e}_z)\ti{M}_z(\ti{e}_z)$, and thus existence of a solution in $\tilde{g}^{(2)}$ is
ensured under the solvability condition
\begin{eqnarray}
\int_{\tilde{{v_x}}} \int_{\ti{e}_z} \left(\partial_{\ti{t}} \ti{g}^{(0)}  
+\ \tilde{{v_x}}\ \partial_{\tilde{x}}\tilde{g}^{(1)} 
-\ti{U}'(\ti{x})\partial{\ti{v}_x}\ti{g}^{(1)} \right)d\ti{e}_z d\tilde{{v_x}}&=& 0.
\label{order2diffint}
\end{eqnarray}
Inserting (\ref{gzero}-\ref{order1diff}) into (\ref{order2diffint}) and noting that 
$ \int_{\tilde{{v_x}}} \int_{\ti{e}_z}\ti{U}'(\ti{x})\partial{\ti{v}_x}\ti{g}^{(1)} d\ti{e}_z d\tilde{{v_x}}=0$,
we get
\begin{equation}
\partial_{\ti{t}} \ti{N}^0 - {\ti{D}_0^{(n)} \partial^2 _{\ti{x}^2} \ti{N}^0} -\ti{\tau}_{ms}\partial_{\ti{x}}\left( \ti{U}'(\ti{x})\ti{N}^0\right) =0, \label{adimdiff}
\end{equation}
where  the diffusion coefficient $\ti{D}_0^{(n)}$ is given by
\begin{equation*}
\ti{D}_0^{(n)}
=  \frac{ \tilde{\tau}_{ms}}{\ti{\gamma}} 
\int_{{\ti{e}_z}} \int_{{\ti{v}_x}}  \tilde{{v}_x}^2\  \ti{M}({\ti{v}_x},{\ti{e}_z})
\ti{l}({\ti{e}_z})\ d{\ti{v}_x}d{\ti{e}_z}.
\end{equation*}
Then we come back to dimension quantities and we denote $N(t,x)= n^* z^* \ti{N}^0(\ti{t},\ti{x})$. With this definition 
${g^0}=(N/\gamma)lM$, where $\gamma   =  \langle \langle  {M} \rangle \rangle$, and 
\[   \langle\langle . \rangle\rangle=\int_{{e}_z} \int_{{e}_x} 
\cdot\  d{v}_x\ d{e}_z, \]
so that
the number density of molecules 
\[n[g^0/l]= \int_{{\mathcal E}_z(z)}  \int_{v_x} \frac{g^0(t,x,v_x,e_z)}{l(e_z)}|e_z| 
\sigma_z(z,e_z)\ dv_xde_z \]
satisfies
\[ \int_{0}^{L} n[g/l]\ dz  = \frac{N(t,x)}{\gamma}\int_{e_z}  \int_{v_x} 
M(v_x,e_z)dv_xde_z = N(t,x). \]
Thus $N(t,x)$ can be interpreted as the number density of molecules per unit x-length in the surface layer, obtained by
integrating (with respect to $z$) over the width of the layer and from (\ref{adimdiff}) we get
\begin{equation*}
\partial_t {N} - {D_0^{(n)}\partial^2_{{x}^2} {N}}-\tau_{ms}\partial_{x}\left(\frac{U'(x)}{m} N\right) =0,
\end{equation*}
Finally we have obtained the following result

\begin{proposition}
Under the hypothesis (\ref{simplepot1}-\ref{simplepot2}-\ref{nocoll}-\ref{tauconst}-\ref{Wminf} ), 
the solutions of (\ref{eqcin}) formally converge as 
$\varepsilon(={t}_c^*/{t}_d^*={\tau}_{ms}^*/t_c^*) \rightarrow 0$
to $\left( N(t,x)/\gamma \right)l(e_z) M(v_x,e_z)$ where the function $N(t,x)$ is solution of the following diffusion 
equation
\begin{equation*}
\partial_tN-{D_0^{(n)}\partial^2_{x^2}N}-\tau_{ms}\partial_{x}\left(\frac{U'(x)}{m} N\right) =0,
\end{equation*} 
where
\begin{equation*}
D_0^{(n)}= \frac{\tau_{ms}}{\gamma}  \langle\langle  {{v}_x}^2\ l{M} \rangle\rangle, \; 
\gamma = \langle\langle M \rangle\rangle, \; {M}(v_x,e_z)= e^{-m(v_x^2+e_z^2)/2kT}. \label{diffcoeff}
\end{equation*}
\end{proposition}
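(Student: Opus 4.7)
The plan is to follow the Hilbert expansion already set up in the scaled equation (\ref{adimavkin}) and invoke Lemma~1 at each order in $\varepsilon$ to invert the relaxation operator; the diffusion equation for $N$ then emerges as the solvability condition at order $\varepsilon^2$. The scaling choice $\tau_{ms}^*/t_c^* = t_c^*/t_d^* = \varepsilon$ is precisely what places a single $1/\varepsilon$ in front of the collision term and a single $\varepsilon$ in front of $\partial_{\tilde t}$, i.e., the parabolic/diffusion scaling.

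First, I insert the expansion (\ref{data}) into (\ref{adimavkin}). At order $\varepsilon^{-1}$ one gets $\tilde g^{(0)} = \tilde{\Theta}[\tilde g^{(0)}]\,\tilde l\,\tilde M$. Applying Lemma~1 with $\psi = 0$, the only solutions are local equilibria $\tilde g^{(0)} = \alpha(\tilde t, \tilde x)\,\tilde l\,\tilde M$, and one defines $\tilde N^0 = \alpha \tilde\gamma$ so that $\tilde g^{(0)} = (\tilde N^0/\tilde\gamma)\,\tilde l \tilde M$.

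Second, at order $\varepsilon^0$ the equation for $\tilde g^{(1)}$ reads
\begin{equation*}
\tilde v_x\,\partial_{\tilde x}\tilde g^{(0)} - \tilde U'(\tilde x)\,\partial_{\tilde v_x}\tilde g^{(0)}
= \frac{1}{\tilde\tau_{ms}}\left(\tilde\Theta[\tilde g^{(1)}]\,\tilde l \tilde M - \tilde g^{(1)}\right).
\end{equation*}
Using the explicit form of $\tilde g^{(0)}$, the left-hand side can be written as $\tilde\psi(\tilde x,\tilde v_x)\,\tilde l(\tilde e_z)\,\tilde M_z(\tilde e_z)$ (the $\tilde e_z$-dependence factors out), with $\tilde\psi$ odd in $\tilde v_x$ so that $\int \tilde\psi\, d\tilde v_x = 0$. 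Lemma~1 then provides an explicit inverse and gives $\tilde g^{(1)}$ in the form (\ref{order1diff}), that is, the local equilibrium plus a correction proportional to $(\tilde v_x - \tilde U'(\tilde x))\,\tilde l\tilde M$ multiplied by $\partial_{\tilde x}\tilde N^0$. Third, at order $\varepsilon^1$, the equation for $\tilde g^{(2)}$ has the same structure, and its solvability condition from Lemma~1 is the velocity integral (\ref{order2diffint}). The $\tilde U'\,\partial_{\tilde v_x}\tilde g^{(1)}$ term vanishes by integration by parts in $\tilde v_x$; inserting $\tilde g^{(0)}$ and $\tilde g^{(1)}$ into the remaining terms yields a closed drift-diffusion equation for $\tilde N^0$, where the diffusion coefficient arises as the Gaussian second moment $\langle\langle \tilde v_x^2\,\tilde l\tilde M\rangle\rangle/\tilde\gamma$ (times $\tilde\tau_{ms}$). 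Rescaling back to dimensional quantities with $N = n^* z^*\tilde N^0$ yields the stated equation.

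The main obstacle is the order-$\varepsilon^0$ inversion: one has to check that after the Vlasov term is included, the left-hand side still has the factorized form $\tilde\psi(\tilde x,\tilde v_x)\,\tilde l(\tilde e_z)\tilde M_z(\tilde e_z)$ required by Lemma~1, and that $\tilde\psi$ satisfies the zero-mean solvability condition in $\tilde v_x$. This works here because $\tilde g^{(0)}$ is proportional to $\tilde M$, whose $\tilde e_z$- and $\tilde v_x$-factors separate cleanly; the contribution of $\partial_{\tilde v_x}\tilde g^{(0)}$ is proportional to $\tilde v_x \tilde M$, which is odd in $\tilde v_x$, and the transport contribution is proportional to $\tilde v_x\,\partial_{\tilde x}\tilde N^0\,\tilde M$, also odd. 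The remaining bookkeeping --- computing $\tilde D_0^{(n)}$ via the Gaussian moment and tracking the dimensional prefactors --- is routine.
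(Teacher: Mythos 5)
Your proposal is correct and follows essentially the same route as the paper: the Hilbert expansion of the scaled equation, inversion of the relaxation operator via Lemma~1 at each order (checking the factorized form $\tilde\psi(\tilde x,\tilde v_x)\,\tilde l(\tilde e_z)\tilde M_z(\tilde e_z)$ and the zero-mean condition in $\tilde v_x$), and the solvability condition at order $\varepsilon^2$ yielding the drift--diffusion equation for $\tilde N^0$. The identification of the key obstacle (that the Vlasov term preserves the structure required by Lemma~1 because $\tilde g^{(0)}\propto\tilde M$ separates in $\tilde v_x$ and $\tilde e_z$) matches the paper's argument.
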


\begin{remark} 

\begin{enumerate}
\item This diffusion equation describes the gas flow in the limit $\varepsilon \rightarrow 0$ only on a
long time scale $\approx t_d$, or, which is in some sense equivalent, for an initial distribution of the form 
$(N/\gamma)M(v_x,e_z)$. 
It does not give a correct description of the flow for a general initial condition and on a short time scale. 
\end{enumerate} 
\end{remark}

\subsubsection{The non-isothermal case}

In this section we assume that the temperature is a (given) function of $x$, and we perform the same
asymptotic analysis as in the previous section. We only indicate the modification induced by this
assumption.  We first notice that  $M =  \exp \left( -(v_x^2+e_z^2)/(2kT(x)\right)$ and that 
$\gamma= \gamma (T(x))$. Moreover 
the derivative of $\gamma$ with respect to the temperature, denoted by $\gamma '$, is given by
\[  \gamma '(T) =  \partial_{ T}  \gamma = \frac{1}{2k T^2}\langle \langle (v_x^2+e_z^2) M  \rangle \rangle.   \]

The asymptotic analysis follows the same steps as in the previous section. The first difference 
occurs at the identification of the expansion at order +1. 
Taking into account the variation of $T$ with respect to $x$, the first term in the expansion
reads now  instead of (\ref{order1diff})
\begin{eqnarray*}
\tilde{g}^{(1)}& =& \ti{N}^1\ti{l}\ti{M} - \frac{\ti{\tau}_{ms}\ti{{v}_x}}{\ti{\gamma}} 
\left(  \partial_{\tilde{x}} \ti{N}^0
- \ti{N}^0 \partial_{\ti{x}} T(x) \left( \frac{\ti{\gamma} '(T(x))\ }{\ti{\gamma}}
+ \frac{\ti{v}_x^2+\ti{e}_z^2 }{ 2k T(x)^2} \right)\right)\ti{M}\nonumber \\ 
&&+\ti{\tau}_{ms}\ti{U}'(\ti{x})\partial_{\ti{v}_x}\ti{g}^0,
 \label{order1diffT} 
\end{eqnarray*}
where the effect of the temperature gradient is included. Finally the asymptotic analysis leads
 to the diffusion equation
\begin{equation}
\partial_tN-\partial_x(D_0^{(n)}\partial_x {N})- \partial_x(D_0^{(T)}\partial_x T)-\tau_{ms}\partial_x \left(\frac{U'(x)}{m}N \right) =0,
\label{difeq-nT}
\end{equation}
where
\begin{eqnarray*}
D_0^{(n)}&=&D_0^{(n)}(T)= \frac{\tau_{ms}}{\gamma (T(x))} \langle\langle  {v}_x^2 lM\rangle\rangle,\\
D_0^{(T)} &=& D_0^{(T)}(N,T)=
\frac{\tau_{ms}}{\gamma (T(x))}  \langle \langle  {v}_x^2\left( \frac{m(v_x^2+e_z^2)}{2kT(x)^2}-
\frac{ \gamma '(T(x))}{\gamma^2 (T(x))} \right)l M\rangle\rangle\ N.
\end{eqnarray*}

Of course, this approach can be easily extended to the case where the temperature field is not given 
but determined by a conduction equation (for instance). \\

It is sometimes preferred to write the diffusion equation (\ref{difeq-nT}) with the mass flux defined with 
respect to $\partial_xp$ rather than with respect to $\partial_xN$. 
Assuming that the pressure of the gas
is given by the ideal gas law $p= kNT$, the diffusion equation (\ref{difeq-nT}) writes
 \begin{equation*}
\partial_tN-\partial_x(C_0^{(p)}\partial_x p)- \partial_x(C_0^{(T)}\partial_x T) 
=0, \label{difeq-pT}
\end{equation*}
where
\begin{eqnarray*}
C_0^{(p)}&=&\frac{1}{kT}D_0^{(n)}\\
C_0^{(T)} &=& D_0^{(T)}-\frac{N}{T}D_0^{(n)}.
\end{eqnarray*}

\subsection{Diffusion model for trapped and free molecules in a narrow channel } 

In the previous subsection we derived a diffusion limit of the tangential model under the
assumption that all molecules in the surface layer are trapped. Now we drop this limitation so that the
flow of trapped molecules is coupled with the flow outside the surface layer by the free molecules. We 
consider the following configuration: the gas molecules move in a narrow channel with diameter $2L$, with
a lower boundary (denoted $1$), located at $z=0$ and an upper boundary (denoted $2$), located at $z=2L$ 
(see figure \ref{narrowchannel}).
In this simple configuration there is no bulk flow and the incoming free molecules in the lower surface 
layer are the free molecules going out of the upper surface layer and conversely. We assume moreover that
the two boundaries are similar (same temperature, same interaction potential) so that
\begin{equation}
T_1=T_2, \;\;\;\;\;U_1(x)=U_2(x)=U(x), \;\;\;\;\;W_1(z)=W_2(2L-z). \label{simbound}
\end{equation}
\begin{figure}
\begin{center}
\includegraphics[height= 3. cm, angle=0]{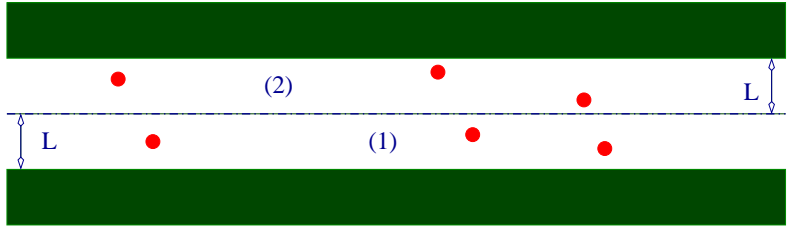}\\
\vspace*{0.3 cm}
\caption{ narrow channel with two surface layers. \label{narrowchannel} }
\end{center}
\end{figure}
Using the same approach as
in section 2, the flow of molecules in the channel can be described (in the isothermal case) by the following 
system of two coupled monodimensional kinetic equations
\begin{eqnarray}
\partial_t g_1+ {v_x}\partial_x g_1-\frac{U'(x)}{m}\partial_{v_x}g_1 = 
Q_{ph}[g_1]+\frac{\chi^f(e_z)}{2{\tau}_z(e_z)} \left(g_2(-|e_z|)-g_1(|e_z|)\right)
, \label{eqh-nc-1} \\
\partial_t g_2+ {v_x}\partial_x g_2-\frac{U'(x)}{m}\partial_{v_x}g_2 = 
Q_{ph}[g_1]+\frac{\chi^f(e_z)}{2{\tau}_z(e_z)} \left(g_1(|e_z|)-g_2(-|e_z|)\right)
, \label{eqh-nc-2} 
\end{eqnarray}
where $g_1$ is the distribution function describing the (trapped and free) molecules inside the lower surface layer and 
$g_2$ is the distribution function describing the (trapped and free) molecules inside the upper surface layer. From hypothesis
(\ref{simbound}) the times $\tau_z$ is the same in the two equations and we have the same property for 
$\tau_{ms}$, for the length $l$, for the characteristic function $\chi^f$, for the operator ${\Theta}$ and the 
distribution $M$.

We denote $g_*=g_1+g_2$,
the distribution function of molecules in the channel. Then if we add equation (\ref{eqh-nc-1}) and equation (\ref{eqh-nc-2}), the sum
of the coupling terms vanishes since the free molecules outgoing from the lower surface layer are the free molecules incoming into 
the upper surface layer and conversely. Thus (using that the operator ${\Theta}$ is linear in $g$),  we obtain for $g_*$ 
the following closed kinetic equation
\begin{eqnarray*}
\partial_t g_*+ {v_x}\partial_x g_* -\frac{U'(x)}{m}\partial_{v_x}g_*&=& 
Q_{ph}[g_*]=\frac{1}{\tau_{ms}}\left({\Theta}[g_*]lM-g_*\right), \label{eqh-nc-*}
\end{eqnarray*}
which is very similar to (\ref{eqcinbis}), but includes the free molecules ($g_i=\chi^tg_i^t+\chi^fg_i^f,\ i=1,2$).
If we rescale this equation as in
(\ref{scaling-diff}-\ref{scaling-diff-2}), we get the following dimensionless equation for $g_*$
\begin{eqnarray*}
\varepsilon \partial_{\ti{t}} \ti{g}_*+ \ti{v}_{{x}}\partial_{\ti{x}} \ti{g}_* -\ti{U}'(\ti{x})\partial_{\ti{v}_x}\ti{g}_*&=& 
\frac{1}{\varepsilon\ \ti{\tau}_{ms}}\left(\ti{{\Theta}}[\ti{g}_*]\ti{l}\ti{M}-\ti{g}_*\right), \label{eqh-nc-*-adim}
\end{eqnarray*}
and the same asymptotic analysis as in section 4.1 leads to the following diffusion equation
for $N_*= \int_0^Ln[(g_1+g_2)/l]dz$
\begin{equation}
\partial_tN_*-{D_0^{(n)}\partial^2_{x^2}N_*}-\tau_{ms}\partial_x\left(\frac{U'(x)}{m}N^* \right) =0, \label{difflimit*}
\end{equation} 
where
\begin{equation*}
D_0^{(n)}= \frac{\tau_{ms}}{\gamma}  \langle\langle  {{v}_x}^2 l{M} \rangle\rangle, \; 
\gamma = \langle\langle M \rangle\rangle, \; {M}(v_x,e_z)= e^{-m(v_x^2+e_z^2)/2kT}. 
\end{equation*}

Let us look now for a diffusion limit of equations (\ref{eqh-nc-1}-\ref{eqh-nc-2}). We need some more assumptions. 
We rescale these equations as in (\ref{scaling-diff}-\ref{scaling-diff-2}) with, in addition  
\begin{equation*}
 \ti{\tau}_z=\frac{{\tau}_z}{\tau^*_z}, \;\;\;\;\; \ti{\chi}^f\ti{g}_i= \frac{\chi^fg_i}{(n_f^*/{v^*}^2)},
\end{equation*}
where ${\tau^*_z=z^*/v^*}$. We denote
\begin{equation*}
\varepsilon_0=\frac{{\tau}_z^*}{t_c^*}=\frac{z^*}{x^*}, \label{eps0}
\end{equation*}
 and as in 4.1.1 we assume that
\begin{equation*}
 \varepsilon=\frac{{\tau}_{ms}^*}{t_c^*} =\frac{tc^*}{t_d^*}.   \label{eps}
\end{equation*}
With these notations (\ref{eqh-nc-1}) writes in dimensionless form
\begin{eqnarray*}
\varepsilon \partial_{\ti{t}} \ti{g}_1+ \ti{v}_{{x}}\partial_{\ti{x}} \ti{g}_1 
-  \ti{U}'(\ti{x})\partial_{\ti{v}_x} \ti{g}_1&=&
\frac{1}{\varepsilon}\left(\ti{{\Theta}}[\ti{g}_1]\ti{l}\ti{M}-\ti{g}_1\right)   \\
&&+ \frac{n_f^* t_c^*}{n^*\tau_z^*}
\frac{ \chi^f(\ti{e}_z)}{ 2\ti{{\tau}}_z(\ti{e}_z)} \left(\ti{g}_2(-|\ti{e}_z|)-\ti{g}_1(|\ti{e}_z|)\right),
\end{eqnarray*}
and we have a similar formula for (\ref{eqh-nc-2}). The right-hand-side of those two equations contains a collision term (of order $\frac{1}{\varepsilon}$)
and a coupling term which makes the two equations relax one towards the other. To go further we must precise the relative size of the coupling term 
with respect to the collision term which is determined by the ratio $\frac{n_f^* t_c^*}{n^*\tau_z^*}=\frac{n_f^*}{n^*\varepsilon_0}$. 
Different regimes can be encountered according to the size of $n_f^*/n^*$ compared with the small quantities $\varepsilon$ and $\varepsilon_0$,
 and we study several ones in the following. \\

\subsubsection{Strong coupling of the two surface layers}
	We assume there that
\begin{equation*}
\varepsilon_0 \leq \varepsilon ,\;\;     \frac{n_f^*}{n^*}=\frac{\varepsilon_0}{\varepsilon} .  \label{strongcoupling}
\end{equation*}
 The first assumption means that $\tau_z^* \leq \tau_{ms}^*$. The second assumption means that the ratio of free molecules is of order 
$\frac{\varepsilon_0}{\varepsilon}$, so that the number of free molecules can be (smaller than but) comparable to the total number of molecules
(when $\varepsilon_0= \varepsilon$). This is a reasonable assumption when $W_m \approx kT$. Under those assumptions the ratio  
$\frac{n_f^* t_c^*}{n^*\tau_z^*}=1/\varepsilon$ so that the coupling term of the two equations is strong and is comparable to the collision term.
 Then the dimensionless form of the system is given by
\begin{eqnarray}
\varepsilon \partial_{\ti{t}} \ti{g}_1+ \ti{v}_{{x}}\partial_{\ti{x}} \ti{g}_1 -\ti{U}'(\ti{x})\partial_{\ti{v}_x}\ti{g}_1&=& 
\Lambda[\ti{g}_1,\ti{g}_2] , \label{eqh-nc-1-adim}\\
\varepsilon \partial_{\ti{t}} \ti{g}_2+ \ti{v}_{{x}}\partial_{\ti{x}} \ti{g}_2 -\ti{U}'(\ti{x})\partial_{\ti{v}_x}\ti{g}_2&=& 
\Lambda[\ti{g}_2,\ti{g}_1] , \label{eqh-nc-2-adim}
\end{eqnarray}
where $\Lambda[\ti{g}_a,\ti{g}_b]=\frac{1}{\varepsilon\ \ti{\tau}_{ms}}\left(\ti{{\Theta}}[\ti{g}_a]\ti{l}\ti{M}-\ti{g}_a\right)+ 
\frac{ \chi^f}{ 2\varepsilon\ \ti{{\tau}}_z} \left(\ti{g}_b(-|\ti{e}_z|)-\ti{g}_a(|\ti{e}_z|)\right)$.\\
Since we have derived a diffusion limit for $g_*=g_1+g_2$, in order to get the diffusion limit of the system of  
equations modeling the evolution of $g_1$ and $g_2$ it is sufficient to study the diffusion limit of the equation 
giving the evolution of $g_{**}=g_1-g_2$ obtained by subtracting (\ref{eqh-nc-2-adim}) from (\ref{eqh-nc-1-adim}). 
Looking for $\ti{g}_{**}=\ti{g}_{**}^0+\varepsilon\ti{g}_{**}^1+...$, inserting in (\ref{eqh-nc-2-adim})-(\ref{eqh-nc-1-adim})
and balancing order by order we obtain\\
\noindent
{\em At the leading order}
\begin{equation*}
 \left(\ti{{\Theta}}[\ti{g}_{**}^0]\ti{l}\ti{M}-\ti{g}_{**}^0\right)-\frac{\ti{\tau}_{ms} \chi^f(\ti{e}_z)}{ 2\ \ti{{\tau}}_z(\ti{e}_z)} 
(\ti{g}_{**}^0(|\ti{e}_z|)+\ti{g}_{**}^0(-|\ti{e}_z|)) = 0.
\label{order0strong}
\end{equation*}
We first remark that this relation implies that $\ti{g}_{**}^0$ is an even function of $\ti{e}_z$, so that it can be written
\begin{equation}
\left(\ti{{\Theta}}[\ti{g}_{**}^0]\ti{l}\ti{M}-\ti{g}_{**}^0\right)-\frac{\ti{\tau}_{ms} \chi^f(\ti{e}_z)}{ \ti{{\tau}}_z(\ti{e}_z)}\ti{g}_{**}^0=0.
\label{order0strongbis}
\end{equation}
 To solve this equation we proceed as in the proof of Lemma 1 and we look for $\ti{g}_{**}=\ti{\beta}(\ti{x},\ti{v}_x,\ti{e}_z)\ti{l}\ti{M}$.
Inserting in (\ref{order0strongbis}) we find for $\ti{\beta}$ the following integral equation
\begin{equation}
\left(1+\frac{ \ti{\tau}_{ms}   \chi^f(\ti{e}_z)}{\ti{{\tau}}_z(\ti{e}_z)}\right)\ti{\beta}(\ti{x},\ti{v}_x,\ti{e}_z)=
\int_{\ti{\epsilon}_z}\int_{\ti{w}_x}\ti{k}(\ti{v}_x,\ti{e}_z,\ti{w}_x,\ti{\epsilon}_z)\ti{\beta}(\ti{x},\ti{w}_x,\ti{\epsilon}_z)
d\ti{w}_xd\ti{\epsilon}_z, \label{inteqstrong}
\end{equation}
where the kernel $\ti{k}$ ( the dimensionless form of the kernel $k$ introduced in Lemma 1) is nonnegative and satisfies
\begin{equation}
\int_{\ti{\epsilon}_z}\int_{\ti{w}_x}\ti{k}(\ti{v}_x,\ti{e}_z,\ti{w}_x,\ti{\epsilon}_z) d\ti{w}_xd\ti{\epsilon}_z=1. \label{propkstrong}
\end{equation}
Thus from (\ref{inteqstrong}), we deduce that $\ti{\beta}$ vanishes or cannot be independent of $\ti{v}_x$ and $\ti{e}_z$ and moreover
\begin{equation*}
\min_{\ti{w}_x,\ti{\epsilon}_z}\ti{\beta}(\ti{x},\ti{w}_x,\ti{\epsilon}_z)
 \leq \left(1+\frac{ \ti{\tau}_{ms}\chi^f(\ti{e}_z)}{\ti{{\tau}}_z(\ti{e}_z)}\right)\ti{\beta}(\ti{x},\ti{v}_x,\ti{e}_z) \leq
\max_{\ti{w}_x,\ti{\epsilon}_z}\ti{\beta}(\ti{x},\ti{w}_x,\ti{\epsilon}_z).
\end{equation*}
Since $\left(1+\frac{ \ti{\tau}_{ms}\chi^f(\ti{e}_z)}{\ti{{\tau}}_z(\ti{e}_z)}\right)>1$ for $|\ti{e}_z|>\ti{W}_m$, 
this relation implies that $\ti{\beta}$ reaches its maximum  for a value $(\ti{w}_x^*,\ti{e}_z^*)$ such that 
$|\ti{e}_z^*|\leq\ti{W}_m$.
Then we have
\begin{equation*}
\ti{\beta}(\ti{x},\ti{v}_x^*,\ti{e}_z^*)=
\int_{\ti{\epsilon}_z}\int_{\ti{w}_x}\ti{k}(\ti{v}_x^*,\ti{e}_z^*,\ti{w}_x,\ti{\epsilon}_z)\ti{\beta}(\ti{x},\ti{w}_x,\ti{\epsilon}_z)
d\ti{w}_xd\ti{\epsilon}_z.
\end{equation*}
But this relation and (\ref{propkstrong}) imply that  $\ti{\beta}$ is independent of $\ti{w}_x$ and $\ti{e}_z$, and 
thus, from the remark above, that $\ti{\beta}=0$. Consequently we have proved that $\ti{g}_{**}^0=0$. \\
Using this result we check that $\ti{g}_{**}^1$ satisfies the same equation as  $\ti{g}_{**}^0$ so that we can also conclude that  
$\ti{g}_{**}^1=0$ and so on. Finally, since $\ti{g}_{**}^0=0$, we conclude that $N_1=N_2=N_*/2$ and that $N_1$ and $N_2$ satisfy the
same equation as $N_*$ (\ref{difflimit*}). In fact since the coupling is strong, the distribution functions $g_1$ and $g_2$ relax 
very fast to the same limit $N_*/(2\gamma)lM$, in a time shorter than the diffusion time.

\subsubsection{Moderate coupling of the two surface layers}

We assume now that
\begin{equation*}
\frac{n_f^*}{n^*}=\varepsilon_0,\;\;   \label{moderatecoupling}
\end{equation*}
so that the ratio $\frac{n_f^* t_c^*}{n^*\tau_z^*}=1$. This assumption means that the number of free molecules is 
much smaller than the total number of molecules, which is reasonable when $W_m>>kT$. Under this assumptions the coupling term
is moderate (of order $1$) and smaller the collision term (of order $1/\varepsilon$).
\\Then the dimensionless form of the system is
given by
\begin{eqnarray*}
\varepsilon \partial_{\ti{t}} \ti{g}_1+ \ti{v}_{{x}}\partial_{\ti{x}} \ti{g}_1 -\ti{U}'(\ti{x})\partial_{\ti{v}_x}\ti{g}_1&=& 
\Lambda[\ti{g}_1,\ti{g}_2] , \label{eqh-nc-1-adim-mod}\\
\varepsilon \partial_{\ti{t}} \ti{g}_2+ \ti{v}_{{x}}\partial_{\ti{x}} \ti{g}_2 -\ti{U}'(\ti{x})\partial_{\ti{v}_x}\ti{g}_2&=& 
\Lambda[\ti{g}_2,\ti{g}_1] , \label{eqh-nc-2-adim-mod}
\end{eqnarray*}
where \[\Lambda[\ti{g}_a,\ti{g}_b] = \frac{1}{\varepsilon\ \ti{\tau}_{ms}}\left(\ti{{\Theta}}[\ti{g}_a]\ti{l}\ti{M}-\ti{g}_a\right)+ 
\frac{ \chi^f}{ 2\ti{{\tau}}_z} \left(\ti{g}_b(-|\ti{e}_z|)-\ti{g}_a(|\ti{e}_z|)\right).    \]
We look for $\ti{g}_i=\ti{g}_i^0+\varepsilon\ti{g}_i^1+...$, with  $\ti{N}[\ti{g}_i^k]=\int_0^Ln[\ti{g}_i^k/l]dz= 0$, for $i=1,2$ and $k\geq 1$.
The asymptotic analysis leads to \\
\noindent
{\em At the leading order}\;  For $i=1,2, \; \; \ti{{\Theta}}[\ti{g}_i^0]\ti{l}\ti{M}-\ti{g}_i^0=0$, 
which implies $\ti{g}_i^0=\frac{\ti{N}_i}{\gamma}\ti{l}\ti{M}$.\\
\noindent
{\em At order +1}
\begin{equation*}
\ti{{\Theta}}[\ti{g}_1]\ti{l}\ti{M}-\ti{g}_1=\frac{ \ti{\tau}_{ms}\chi^f(\ti{e}_z)}{ 2\ti{{\tau}}_z(\ti{e}_z)}(\ti{g}_2^0-\ti{g}_1^0)+
\ti{\tau}_{ms}\left( \ti{v}_{{x}}\partial_{\ti{x}} \ti{g}_1^0 -\ti{U}'(\ti{x})\partial_{\ti{v}_x}\ti{g}_1^0 \right).
\end{equation*}
A necessary condition of solvability  is that the integral of the right-hand-side with respect to $\ti{v}_x$ and $\ti{e}_x$ vanishes, 
which implies that
\[(\ti{N}_2-\ti{N}_1)\int_{\ti{e}_z}\int_{\ti{v}_x}\frac{\ti{\tau}_{ms}\chi^f}{2\ti{\tau}_z(\ti{e}_z)}\ti{l}(\ti{e}_z)\ti{M}d\ti{v}_xd\ti{e}_z=0,\]
and thus $\ti{N}_1=\ti{N}_2$. With this condition the hypothesis of Lemma 1 is satisfied by the right-hand-side and taking 
into account that $\ti{N}[\ti{g}_i^1]=0$ , we obtain 
$\ti{g}_1^i=\ti{v}_{{x}}\partial_{\ti{x}} \ti{g}_i^0 -\ti{U}'(\ti{x})\partial_{\ti{v}_x}\ti{g}_i^0 $. Finally, coming back to dimension variables
we conclude that $N_1=N_2=N_*/2$ are solution of a the same diffusion equation as $N_*$.

\subsubsection{Weak coupling between the two surface layer}

We assume now that
\begin{equation}
 \frac{n_f^*}{n^*}=\varepsilon \varepsilon_0 .  \label{smallnf}
\end{equation}
This assumption means that the ratio of free molecules is very small.
Under this hypothesis the ratio $\frac{n_f^* t_c^*}{n^*\tau_z^*}=\varepsilon$, so that the 
coupling of the two equations  is weak  (the coupling term is of order $\varepsilon$ while the collision term
is of order $1/\varepsilon$) and the dimensionless form of the system governing the evolution of $g_1$ and 
$g_2$ is
\begin{eqnarray}
\varepsilon \partial_{\ti{t}} \ti{g}_1+ \ti{v}_{{x}}\partial_{\ti{x}} \ti{g}_1 -\ti{U}'(\ti{x})\partial_{\ti{v}_x}\ti{g}_1&=& 
\Lambda[\ti{g}_1,\ti{g}_2] , \label{eqh-nc-1-adim-weak}\\
\varepsilon \partial_{\ti{t}} \ti{g}_2+ \ti{v}_{{x}}\partial_{\ti{x}} \ti{g}_2 -\ti{U}'(\ti{x})\partial_{\ti{v}_x}\ti{g}_2&=& 
\Lambda[\ti{g}_2,\ti{g}_1] , \label{eqh-nc-2-adim-weak}
\end{eqnarray}
where \[\Lambda[\ti{g}_a,\ti{g}_b]=\frac{1}{\varepsilon\ \ti{\tau}_{ms}}\left(\ti{{\Theta}}[\ti{g}_a]\ti{l}\ti{M}-\ti{g}_a\right)+ 
\frac{\varepsilon\ \chi^f(\ti{e}_z)}{ 2 \ti{{\tau}}_z(\ti{e}_z)} \left(\ti{g}_b(-|\ti{e}_z|)-\ti{g}_a(|\ti{e}_z|)\right). \]  
To study the diffusion limit of this system we integrate the two equations with respect to $\ti{e}_z$. It comes
\begin{eqnarray}
\varepsilon \partial_{\ti{t}} \ti{h}_1+ \ti{v}_{{x}}\partial_{\ti{x}} \ti{h}_1 -\ti{U}'(\ti{x})\partial_{\ti{v}_x}\ti{h}_1= 
\frac{1}{\varepsilon\ \ti{\tau}_{ms}}\left(\frac{\ti{N}_1}{\ti{\gamma}_x}\ti{M}_x-\ti{h}_1\right)+ 
\frac{\varepsilon}{ 2} \left(\ti{F}_2-\ti{F}_1\right)
 , \label{eqh-nc-1-adim-int}\\
\varepsilon \partial_{\ti{t}} \ti{h}_2+ \ti{v}_{{x}}\partial_{\ti{x}} \ti{h}_2 -\ti{U}'(\ti{x})\partial_{\ti{v}_x}\ti{h}_2= 
\frac{1}{\varepsilon\ \ti{\tau}_{ms}}\left(\frac{\ti{N}_2}{\ti{\gamma}_x}\ti{M}_x-\ti{h}_2\right)+ 
\frac{\varepsilon}{ 2} \left(\ti{F}_1-\ti{F}_2\right)
 , \label{eqh-nc-2-adim-int}
\end{eqnarray}
where $\ti{h}_i=\int_{\ti{e}_z}\ti{g}_id\ti{e}_z$,   $\ti{N}_i=\ti{N}[\ti{g}_i]=\ti{N}[\ti{h}_i]$, 
$\ti{F}_i=\int_{\ti{e}_z}\frac{\chi^f (\ti{e}_z)}{\ti{\tau}_z}\ti{g}_i(\ti{e}_z)d\ti{e}_z$. Unfortunately, this system is not closed
because, in general, we cannot write $\ti{F}_i$ as a function of $\ti{h}_i$. Nevertheless 
\begin{equation}
\mbox{if}\;
\ti{{g}}_i=\frac{\ti{h}_i}{\int\ti{l}\ti{M}_zd\ti{e}_z} \ti{l}(\ti{e}_z)\ti{M}_z(\ti{e}_z),\ \mbox{then}\  \ti{F}_i=\ti{c}\ \ti{h}_i, \;
\mbox{ where}\ \ti{c}=\frac{\int_{\ti{e}_z}\chi^f (\ti{e}_z) |\ti{e}_z|\ti{M}_z(\ti{e}_z)d\ti{e}_z}{\int\ti{l}\ti{M}_zd\ti{e}_z}.
\label{Fcond}
\end{equation}
But an asymptotic analysis from the dimensionless form of (\ref{eqh-nc-1-adim-weak}-\ref{eqh-nc-2-adim-weak})
 proves that $\ti{g}_i^0=(\ti{N}_i/\ti{\gamma})\ti{l}\ti{M}$ and 
$\ti{g}_i^1=(\ti{N}_i/\ti{\gamma})\ti{l}\ti{M}-\frac{\ti{\tau}_{ms}}{\gamma}\left( \partial_{\ti{x}}\ti{N}_i^0\ \ti{v}_{{x}}\ti{l}\ti{M}
- \ti{U}'(\ti{x})\partial_{\ti{v}_x}\ti{{g}}_i^0\right) $, so that
$\ti{g}_i^0+\varepsilon \ti{g}_i^1$ satisfies the assumption of (\ref{Fcond}). Thus we can write
(\ref{eqh-nc-1-adim-int}-\ref{eqh-nc-2-adim-int}) in the following form
\begin{eqnarray*}
\varepsilon \partial_{\ti{t}} \ti{h}_1+ \ti{v}_{{x}}\partial_{\ti{x}} \ti{h}_1 -\ti{U}'(\ti{x})\partial_{\ti{v}_x}\ti{h}_1&=& 
\frac{1}{\varepsilon\ \ti{\tau}_{ms}}\left(\frac{\ti{N}_1}{\ti{\gamma}_x}\ti{M}_x-\ti{h}_1\right)+ 
\frac{\varepsilon}{ 2}\ti{c} \left(\ti{h}_2-\ti{h}_1\right)+{\mathcal O}(\varepsilon^2)
 , \label{eqh-nc-1-adim-intbis}\\
\varepsilon \partial_{\ti{t}} \ti{h}_2+ \ti{v}_{{x}}\partial_{\ti{x}} \ti{h}_2 -\ti{U}'(\ti{x})\partial_{\ti{v}_x}\ti{h}_2&=& 
\frac{1}{\varepsilon \ti{\tau}_{ms}}\left(\frac{\ti{N}_2}{\ti{\gamma}_x}\ti{M}_x-\ti{h}_2\right)+ 
\frac{\varepsilon}{ 2}\ti{c} \left(\ti{h}_1-\ti{h}_2\right)+{\mathcal O}(\varepsilon^2),
  \label{eqh-nc-2-adim-intbis}
\end{eqnarray*}
where $\ti{\gamma}_x=\int_{\ti{v}_x}\ti{M}_x(\ti{v}_x)d\ti{v}_x= \sqrt{2\pi}$. 
The asymptotic analysis on this system follows the same ideas as in previous subsections and we get\\
\noindent
{\em At the leading order}
\begin{equation}
\ti{h}_i^0=\frac{\ti{N}_i^0}{\ti{\gamma}_x}\ti{M}_x,\;\; i=1,2, \;\; \mbox{with}\;\; \ti{N}_1^0+\ti{N}_2^0=\ti{N}_*^0.   \label{diff-free-z1}
\end{equation}

\noindent
{\em At order +1}

\begin{eqnarray}
 \ti{h}_i^1 &=& \frac{\ti{N}_i^1}{\ti{\gamma}_x}\ti{M}_x-\ti{\tau}_{ms}\left( \partial_{\ti{x}}\frac{\ti{N}_i^0}{\ti{\gamma}_x}\ \ti{v}_{{x}}\ti{M}_x
- \ti{U}'(\ti{x})\partial_{\ti{v}_x}\ti{h}_i^0\right).
 \label{diff-free-o1}
\end{eqnarray}
\noindent
{\em At order +2}
\begin{eqnarray*}
 \partial_{\ti{t}} \ti{h}_1^0+ \ti{v}_{{x}}\partial_{\ti{x}} \ti{h}_1^1-\ti{U}'(\ti{x})\partial_{\ti{v}_x}\ti{h}_1^1 &=& 
\frac{1}{ \ti{\tau}_{ms}}\left(\frac{\ti{N}[\ti{h}_1^2]}{\ti{\gamma}_x}\ti{M}_x-\ti{h}_1^2\right)+ 
\frac{\ti{c}}{ 2} \left(\ti{h}_2^0-\ti{h}_1^0\right)
 , \label{eqh-nc-1-o2}\\
\partial_{\ti{t}} \ti{h}_2^0+ \ti{v}_{{x}}\partial_{\ti{x}} \ti{h}_2^1-\ti{U}'(\ti{x})\partial_{\ti{v}_x}\ti{h}_2^1 &=& 
\frac{1}{ \ti{\tau}_{ms}}\left(\frac{N[\ti{h}_2^2]}{\ti{\gamma}_x}\ti{M}_x-\ti{h}_2^2\right)+ 
\frac{\ti{c}}{ 2} \left(\ti{h}_1^0-\ti{h}_2^0\right).
 \label{eqh-nc-2-o2}
\end{eqnarray*}
With this form of the equations, we can use Lemma 1 and
$\ti{h}_1^2$ and $\ti{h}_2^2$, can be defined from those relations if and only if the following solvability conditions holds
\begin{eqnarray*}
\int_{\ti{v}_x} \left( \partial_{\ti{t}} \ti{h}_1^0+ \ti{v}_{{x}}\partial_{\ti{x}} \ti{h}_1^1-\ti{U}'(\ti{x})\partial_{\ti{v}_x}\ti{h}_1^1-
\frac{\ti{c}}{ 2} \left(\ti{h}_2^0-\ti{h}_1^0\right) \right) d\ti{v}_x & =& 0,
  \label{eqh-nc-1-o2-int}\\
\int_{\ti{v}_x} \left( \partial_{\ti{t}} \ti{h}_2^0+ \ti{v}_{{x}}\partial_{\ti{x}} \ti{h}_2^1-\ti{U}'(\ti{x})\partial_{\ti{v}_x}\ti{h}_2^1-
\frac{\ti{c}}{ 2} \left(\ti{h}_1^0-\ti{h}_2^0\right) \right) d\ti{v}_x &=& 0,
  \label{eqh-nc-2-o2-int}
\end{eqnarray*}
After inserting (\ref{diff-free-z1}-\ref{diff-free-o1}) in the above relations and coming back in dimension variables we finally obtain the following result

\begin{proposition}
Under the hypothesis 
(\ref{simplepot1}-\ref{simplepot2}-\ref{nocoll}-\ref{tauconst}-\ref{simbound}-\ref{smallnf}), 
the solutions $g_i,\ i=1,2$ of (\ref{eqh-nc-1}-\ref{eqh-nc-2} ) formally converge as 
$\varepsilon(={t}_c^*/{t}_d^*={\tau}_{ms}^*/t_c^*) \rightarrow 0$
to 
\[ \left( N_i(t,x)/\gamma \right)l(e_z)\ M(v_x,e_z), i=1,2\]
 where the functions $N_i(t,x), i=1,2$ are solutions 
of the following system of diffusion equations
\begin{eqnarray*}
\partial_{{t}}{N}_1-{{D}_0^{(n)}\partial^2_{{x}^2}{N}_1}-\tau_{ms}\partial_x\left(\frac{U'(x)}{m}N_1 \right) &=& {c}\ ({N}_2-{N}_1),\\
\partial_{{t}}{N}_2-{{D}_0^{(n)}\partial^2_{{x}^2}{N}_2}-\tau_{ms}\partial_x\left(\frac{U'(x)}{m}N_2 \right) &=& {c}\ ({N}_1-{N}_2),
\end{eqnarray*}
where
\begin{eqnarray*}
D_0^{(n)}&=& \frac{\tau_{ms}}{\gamma}  \langle\langle  {{v}_x}^2 {M} \rangle\rangle, \\
\gamma = \langle\langle M \rangle\rangle, \ {M}(v_x,e_z)&=& e^{-m(v_x^2+e_z^2)/2kT}, \ 
c=c(W_m) = \frac{\int_{e_z}\chi^f(e_z)|e_z|M_z(e_z)de_z}{\int_{e_z}l(e_z)M_z(e_z)de_z}. 
\end{eqnarray*}
\end{proposition}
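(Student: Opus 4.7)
The plan is to adapt the Hilbert-expansion strategy of Section~4.1 to the two coupled equations (\ref{eqh-nc-1}-\ref{eqh-nc-2}) in the weak-coupling regime (\ref{smallnf}). First I would rewrite the system in dimensionless form, completing the scaling of (\ref{scaling-diff}-\ref{scaling-diff-2}) with the new parameters $\varepsilon_0$ and the assumption $n_f^*/n^*=\varepsilon\varepsilon_0$, so that the exchange term between the two layers appears at order $\varepsilon$ while the collision operator $\tfrac{1}{\varepsilon \tilde\tau_{ms}}(\tilde\Theta[\tilde g_a]\tilde l\tilde M-\tilde g_a)$ remains of order $1/\varepsilon$. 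Since the coupling terms involve only free molecules through $\chi^f(\tilde e_z)/\tilde\tau_z(\tilde e_z)$, integrating each equation against $d\tilde e_z$ should produce a system for the marginals $\tilde h_i=\int \tilde g_i\,d\tilde e_z$ in which the exchange is represented by $\tilde F_i=\int (\chi^f/\tilde\tau_z)\tilde g_i\,d\tilde e_z$.

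Next I would perform a Hilbert expansion $\tilde g_i=\tilde g_i^0+\varepsilon \tilde g_i^1+\varepsilon^2 \tilde g_i^2+\cdots$ directly on the $(\tilde x,\tilde v_x,\tilde e_z)$-equations. At leading order the relation $\tilde\Theta[\tilde g_i^0]\tilde l\tilde M-\tilde g_i^0=0$ combined with Lemma~1 forces $\tilde g_i^0=(\tilde N_i^0/\tilde\gamma)\tilde l\tilde M$. At order $\varepsilon$, Lemma~1 again gives $\tilde g_i^1$, up to an additive mean-zero piece, in the form of (\ref{order1diff}), namely $\tilde g_i^1=\tilde\alpha_i^1\tilde l\tilde M - (\tilde\tau_{ms}/\tilde\gamma)(\tilde v_x-\tilde U'(\tilde x))\,\partial_{\tilde x}\tilde N_i^0\,\tilde l\tilde M$. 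At order $\varepsilon^2$ I would impose the solvability condition $\langle\langle \cdot\rangle\rangle=0$ on the right-hand side (after accounting for the now-present exchange term), which is exactly what Lemma~1 demands.

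The key new difficulty, absent from Section~4.1, is that $\tilde F_i$ is \emph{not} a functional of $\tilde h_i$ alone, so one cannot close the $\tilde e_z$-integrated system directly. I would resolve this by invoking observation (\ref{Fcond}): since $\tilde g_i^0$ is a tensor product of $\tilde l\tilde M_z$ with an $\tilde e_z$-independent factor, and the same is true for the leading structure of $\tilde g_i^0+\varepsilon\tilde g_i^1$ up to $\mathcal{O}(\varepsilon^2)$, one has $\tilde F_i=\tilde c\,\tilde h_i+\mathcal{O}(\varepsilon^2)$ with the explicit constant $\tilde c$ given in (\ref{Fcond}). Once this closure is established, the integrated two-species system falls into exactly the same mould as the scalar equation of Proposition~3, and the order-$\varepsilon^2$ solvability conditions yield the coupled diffusion equations of the statement, the dimensional coupling constant $c$ being the dimensional counterpart of $\tilde c$.

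The main obstacle I anticipate is precisely this closure: one must check that replacing $\tilde F_i$ by $\tilde c\,\tilde h_i$ commits an error that is small enough not to pollute the order at which the solvability condition is imposed. This is why the expansion has to be pushed one order beyond the leading term before the closure is applied, and it is the reason the final reaction coefficient $c$ depends only on $W_m$ (through $\chi^f$, $l$ and $M_z$) and not on the tangential potential $U$. The remainder of the argument—deriving the $N_i$-equation from the solvability condition and reverting to dimensional variables—is bookkeeping analogous to Propositions~3 and the strong/moderate-coupling cases treated just above.
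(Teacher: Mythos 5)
Your proposal follows essentially the same route as the paper: nondimensionalize so the exchange term sits at order $\varepsilon$, integrate in $e_z$ to get the marginal system for $h_i$, justify the closure $F_i = c\,h_i + \mathcal{O}(\varepsilon^2)$ by checking that $g_i^0+\varepsilon g_i^1$ obtained from the Hilbert expansion has the tensor-product structure required by (\ref{Fcond}), and then read off the coupled diffusion equations from the order-$\varepsilon^2$ solvability conditions via Lemma~1. You correctly identify the closure of $F_i$ as the one genuinely new step and the reason the expansion must be carried one order past the leading term, which is exactly how the paper handles it.
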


\begin{remark}
From this model, thanks to convenient rescalings, we can recover some of the diffusion models obtained above
\begin{enumerate}
\item In the limit of a large  $W_m$ the number of free molecules tends to $0$ and so does 
the coefficient $c=c(W_m) = (\int_{e_z}\chi^f(e_z)|e_z|M_z(e_z)de_z)/(\int_{e_z}l(e_z)M_z(e_z)de_z)$. 
Thus we recover in this limit the result of proposition 3, where we found a diffusion
equation for the trapped molecules assuming that the free molecules could be neglected. In the present configuration, 
we obtain, in the limit of a large $W_m$ two independent diffusion equations for $N_1$ and $N_2$.\\
\item In the limit of a small $\tau_{z}$, then $N_2=N_1$, and we recover the diffusion model of the moderate coupling regime.
\end{enumerate}
\end{remark}
\section{Conclusion}
We have presented the formal derivation of a hierarchy of models describing a gas flow in the vicinity of a wall,
using scaling and systematic asymptotic analysis. Following the ideas introduced in \cite{Borman},\cite{BKP2},\cite{Kry1},
\cite{Kry2},\cite{Kry3},\cite{Kry4},\cite{BBK},
the influence of the wall is taken into account through Van der Wall forces acting
on the gas molecules and through a relaxation of the gas molecules by the substrate.\\

In this paper we made some simplifying assumptions: we assumed that the molecules move in a 2D plane,
we considered the case where the intermolecular collisions can be neglected
and we assumed that the interaction potential has a simplified structure.\\
 With those assumptions we derived a multiphase model 
involving a classical kinetic equation for the bulk flow coupled with two one-dimensional kinetic equations modeling 
the trapped and free molecules inside the surface layer. This one-dimensional kinetic model can be interpreted as a
non-local boundary condition for the bulk flow.\\
Then, assuming that the interaction potential is rapidly
oscillating in the direction parallel to the solid surface, an averaged mesoscopic kinetic model is obtained by 
homogeneization. \\
Finally, in the limit of a small relaxation time, we derived from the multiphase kinetic model 
diffusion models for the surface molecules. In a first step we assume that the free molecules can be neglected and
in a second step we consider a narrow channel constituted by two surface layers. Then we took into account the trapped
and free molecules in the channel and we derived 
several diffusion models according to the (strong, moderate or weak) coupling of the two surface layers. \\
The extension of those models for more general interaction potentials and for
collisional flows will be studied in forthcoming papers. \\

\section*{Acknowledgments}
Part of this work has been conducted during while K.A. was visiting the
Institut de Math\'ematiques de Toulouse, under the auspices of the foundation 
``Sciences et Technologies pour l'A\'eronautique et l'Espace'' under the grant
``Plasmax'' (RTRA-STAE/2007/PF/002).  
K.A. and P.D. express their cordial thanks to the Isaac Newton
Institute for Mathematical Sciences for its hospitality during the preparation
of the present paper. This work was partially supported by the Grant-in-Aid
for Scientific Research No.~20360046 from JSPS. P.D. has been supported  
by the Marie Curie Actions of the European 
Commission in the frame of the DEASE project (MEST-CT-2005-021122)


\medskip

\end{document}